\documentclass[12pt,reqno]{amsart}

\usepackage[margin=1in,footskip=0.5in]{geometry}

\usepackage{mathtools}
\usepackage{amssymb}
\usepackage{bm}
\usepackage{amsthm}
\usepackage{amsmath}
\usepackage{thmtools}

\usepackage{graphicx}
\usepackage{tikz}
\usepackage[all]{xy}

\usepackage{enumitem}

\usepackage{xcolor}

\definecolor{myred}{rgb}{0.92,0.04,0.04}
\definecolor{myblue}{rgb}{0.14,0.36,0.90}
\definecolor{mygreen}{rgb}{0.00,0.46,0.12}
\newcommand{\red}[1]{\textcolor{myred}{#1}} 
\newcommand{\blue}[1]{\textcolor{myblue}{#1}} 
\newcommand{\green}[1]{\textcolor{mygreen}{#1}}

\usepackage[colorlinks=true,citecolor=blue,linkcolor=red,urlcolor=black,linktocpage,unicode]{hyperref}

\usetikzlibrary{patterns,decorations.pathmorphing,decorations.pathreplacing}

\theoremstyle{plain}
\numberwithin{equation}{section}

\newtheorem{thm}{Theorem}[section]
\newtheorem*{thm*}{Theorem}
\newtheorem{lem}[thm]{Lemma}
\newtheorem*{lem*}{Lemma}
\newtheorem{cor}[thm]{Corollary}

\newtheorem{defn}[thm]{Definition}

\newtheorem{ex}[thm]{Example}
\newtheorem{conj}[thm]{Conjecture}
\newtheorem{ques}[thm]{Question}



\selectfont

\begin{document}

\title{The Fujimoto Conjecture via Total Positivity}
\author{Shuhei Katsuta}
\thanks{This work was supported by JSPS KAKENHI Grant Number JP24KJ1283.}
\address{Graduate School of Mathematics, Nagoya University. Furocho, Chikusaku, Nagoya, Japan, 464-8602.}
\email{shuhei.katsuta.e6@math.nagoya-u.ac.jp}
\date{}

\begin{abstract}
  H. Fujimoto showed that for a complete minimal surface in $ \mathbb{R}^m $, if the Gauss map is non-degenerate, then it omits at most $ \frac{m(m + 1)}{2} $ hyperplanes in the complex projective space $ \mathbb{P}^{m - 1} $ in general position, and that the number $ \frac{m(m + 1)}{2} $ is best possible for all odd integers $ m \geq 3 $ and for even integers with $ 4 \leq m \leq 16 $. In this paper, we prove that the number $ \frac{m(m + 1)}{2} $ is also best possible for all even integers $ m \geq 4 $, as conjectured by Fujimoto. The main tool is a special planar network $ (\Gamma_0, \omega) $ in the theory of positive matrices.
\end{abstract}

\maketitle

\section{Introduction}
In 1990, H. Fujimoto formulated the following conjecture.
\begin{conj}[\textbf{Fujimoto conjecture}, \cite{Fujimoto_3}, p.~384; \cite{Fujimoto_2}, p.~319; \cite{Fujimoto_1}, p.~42; \cite{Fujimoto_7}, p.~198]\label{conj_Fujimoto}

  Let $ m \geq 2 $ be an even integer, and set $ t \coloneqq \frac{m}{2} $. Then the following $ 3t $ polynomials
  \begin{alignat*}{3}
     & f_i(z) \coloneqq z^{i - 1}                        & \quad & (1 \leq i \leq t),      \\
     & f_i(z) \coloneqq (z - 1)^{i - 1}                  & \quad & (t + 1 \leq i \leq 2t), \\
     & f_i(z) \coloneqq z^{i - t - 1}(z - 1)^{m - i + t} & \quad & (2t + 1 \leq i \leq 3t)
  \end{alignat*}
  are in general position. That is, any $ m $ polynomials chosen from the above are linearly independent over $ \mathbb{C} $.
\end{conj}
The aim of this paper is to prove this conjecture. Although the statement is purely algebraic, it arises from the theory of minimal surfaces. First, we explain the background of this conjecture.

Let $ S $ be a minimal surface immersed in $ \mathbb{R}^m $, and let $ \Pi $ be the set of all oriented $ 2 $-dimensional subspaces of $ \mathbb{R}^m $. It is well known that $ S $ admits the structure of a Riemann surface, and that
$ \Pi $ can be identified with the quadric
\begin{equation*}
  Q_{m - 2} \coloneqq \{(w_1 : w_2 : \cdots : w_m) \in \mathbb{P}^{m - 1} \mid w_1^2 + w_2^2 + \cdots + w_m^2 = 0\}.
\end{equation*}
The (generalized) \textbf{Gauss map} $ G : S \to Q_{m - 2} \subset \mathbb{P}^{m - 1} $ is defined by mapping a point $ p \in S $ to the tangent space $ T_p(S) \in \Pi \simeq Q_{m - 2} $, and is holomorphic. We also introduce the complex Gauss map as follows. When an open Riemann surface $ S $ is holomorphically immersed in $ \mathbb{C}^{m} $ (i.e., there exists a holomorphic curve $ f : S \to \mathbb{C}^m $ such that $ df $ vanishes nowhere), the \textbf{complex Gauss map} is defined by sending $ p \in S $ to the point in $ \mathbb{P}^{m - 1} $ corresponding to the complex tangent line $ T_p(S) $ at $ p $.
For details, see \cite{Fujimoto_3, Fujimoto_5, Fujimoto_7, Osserman, Ru_3}. The study of the generalized Gauss map from the perspective of value distribution theory (Weyl--Ahlfors theory) was started by S. S. Chern and R. Osserman \cite{Chern-Osserman}. This direction was pursued by many researchers (for the history, for example, see \cite{Fujimoto_7, Osserman, Ru_2}), and in the course of these studies, Fujimoto obtained the following theorems by establishing the modified defect relation (see \cite{Fujimoto_3, Fujimoto_4, Fujimoto_5, Fujimoto_6, Fujimoto_7}).
\begin{thm}[\cite{Fujimoto_3}, p.~366]\label{thm_gauss_map_Rm}
  Let $ S $ be a complete minimal surface immersed in $ \mathbb{R}^m $. Assume that the Gauss map $ G $ of $ S $ is non-degenerate (i.e., $ G(S) $ is not contained in any hyperplane in $ \mathbb{P}^{m - 1} $). Then $ G $ omits at most $ \frac{m(m + 1)}{2} $ hyperplanes in $ \mathbb{P}^{m - 1} $ in general position.
\end{thm}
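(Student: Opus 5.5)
We would argue by contradiction, following the scheme of Fujimoto's modified defect relation. Suppose $G$ is non-degenerate and omits $q > \frac{m(m+1)}{2}$ hyperplanes $H_1,\dots,H_q$ in general position. Pass to the universal covering of $S$. Completeness is preserved, and the covering is either $\mathbb{C}$ or the unit disc $\Delta$.

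In a local coordinate $z$, write $G = (g_1:\dots:g_m)$ with $g_i = \partial x_i/\partial z$. This is a reduced representation, and the induced metric is $ds = \sqrt{2}\,\|g\|\,|dz|$. Let $F_j := \langle g, H_j\rangle$. These functions have no zeros, since $H_j$ is omitted. Let $W = W(g_1,\dots,g_m)$ be the Wronskian, which is not identically zero by non-degeneracy.

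In the parabolic case, the classical Cartan--Nochka second main theorem for a non-degenerate curve $\mathbb{C}\to\mathbb{P}^{m-1}$ omitting more than $m$ hyperplanes in general position already gives a contradiction. So the real content is the disc case.

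\textbf{Step 1: the negatively curved pseudo-metric.} Introduce the associated curves $G_k$, with norms $\|G_k\| = \|g\wedge g'\wedge\dots\wedge g^{(k)}\|$, and the contact functions $\varphi_k(H_j) = |\langle G_k, H_j\rangle|^2/\|G_k\|^2$. For suitable constants $0<\epsilon$, $p$, consider the pseudo-metric
\[
d\tau^2 = C\left(\frac{|W|\,\|g\|^{\,q-m}}{\prod_{j=1}^q |F_j|\,\prod_{j,k}\log^{p}\!\big(\mu/\varphi_k(H_j)\big)}\right)^{\frac{2}{\sigma}}|dz|^2,
\qquad \sigma=\tfrac{m(m-1)}{2}+\dots .
\]
Here the exponents are chosen as in Fujimoto's main lemma. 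The key estimate is that, for $\gamma := q - \frac{m(m+1)}{2} > 0$ and after Nochka-type weights are used to handle general position, $d\tau$ has Gaussian curvature $\le -1$. Then, by the Ahlfors--Schwarz lemma, $d\tau^2$ is dominated by the Poincaré metric $4|dz|^2/(1-|z|^2)^2$ on $\Delta$.

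This curvature computation is the main obstacle. It requires the Cartan--Weyl--Ahlfors identities $dd^c\log\|G_k\|^2 = \|G_{k-1}\|^2\|G_{k+1}\|^2/\|G_k\|^4\,dd^c|z|^2$. It also requires the inequality $\sum_j \varphi_k(H_j)/\log^2(\mu/\varphi_k(H_j))$ controlling products of these terms, which is obtained via the concavity of $\log$. Finally, the exponents must be balanced so that the count of hyperplanes needed is exactly $\frac{m(m+1)}{2}$, coming from $\sum_{k=0}^{m-1}(k+1)$.

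\textbf{Step 2: a flat metric and a divergent path.} Next consider
\[
d\rho = \left(\frac{\prod_j |F_j|^{\,\cdots}}{|W|^{\,\cdots}}\right)^{\frac{1}{q-m(m+1)/2}}\cdot\|g\|\,|dz|\text{-correction},
\]
more precisely $d\rho = |h|\,|dz|$ with $h$ a holomorphic function without zeros built from $W$ and the $F_j$. We arrange the exponents so that, combined with Step 1, the estimate
\[
ds \le C\, d\rho^{\,\alpha}\,\Big(\tfrac{1}{1-|z|^2}\Big)^{\beta},\qquad \beta<1,
\]
holds. Since $d\rho$ is flat, take a maximal disc on which the developing map $w=\int h\,dz$ has a local inverse. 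Following the image of a ray, this yields a divergent path $\gamma$ in $\Delta$ with finite $d\rho$-length. The inequality above, with $\beta<1$ and the Schwarz-type bound along $\gamma$, then shows that $\gamma$ has finite $ds$-length. This contradicts the completeness of $S$.

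Therefore $q \le \frac{m(m+1)}{2}$.
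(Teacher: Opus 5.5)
The paper gives no proof of this statement. It quotes it from Fujimoto, who obtained it from his modified defect relation. Your outline follows his architecture: Cartan/Borel when the universal cover is $\mathbb{C}$; otherwise a negatively curved pseudo-metric with Ahlfors--Schwarz, a flat metric with its developing map, and a divergent path of finite length. Taking your Step~1 estimate as given, your way of closing the argument works. If $\Phi\colon\Delta_R\to\Delta$ inverts the developing map, then $|h\circ\Phi|\,|\Phi'|=1$. Schwarz--Pick then gives $\frac{1}{|h|(1-|z|^2)}\le\frac{R}{R^2-|w|^2}$ at $z=\Phi(w)$, hence $\Phi^*ds\le C\bigl(\frac{R}{R^2-|w|^2}\bigr)^{\beta}|dw|$, which is integrable along a radius when $\beta<1$. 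The genuine gap is your assertion in Step~2 that $d\rho=|h|\,|dz|$ with $h$ holomorphic and zero-free on $\Delta$.

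\textbf{Why that assertion fails.} The only upper bound for $\|g\|$ that Step~1 provides is $\|g\|^{q-m}\le C\,\frac{\prod_j|F_j|}{|W|}\prod_{j,k}\log^{p}\bigl(\mu/\varphi_k(H_j)\bigr)(1-|z|^2)^{-\sigma}$.
\begin{itemize}
\item The logarithms are not moduli of holomorphic functions, so they must be absorbed first. For example, use $\varphi_k(H_j)\ge\varphi_0(H_j)=|F_j|^2/\|g\|^2$ and $\log x\le C_\epsilon x^{\epsilon}$, at the cost of $O(\epsilon)$ in the exponents.
\item After that, the density that makes your comparison work is $|h|=\bigl(\prod_j|F_j|^{1-O(\epsilon)}/|W|\bigr)^{1/\delta}$ with $\delta=q-m-\sigma-O(\epsilon)$.
\item $W$ vanishes wherever $g,g',\dots,g^{(m-1)}$ are linearly dependent, and in general such points exist. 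So $d\rho$ is flat only on $\Delta'=\Delta\setminus W^{-1}(0)$.
\item The maximal-disc construction then only yields a path that is divergent in $\Delta'$. That path may end at a zero $z_0$ of $W$ inside $\Delta$, where $ds$ is regular, and completeness is not contradicted.
\end{itemize}

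\textbf{The missing step.} You must force $d\rho\ge c\,|z-z_0|^{-1}|dz|$ near every zero of $W$, i.e.\ $\delta\le 1$, while keeping $\delta>0$ (which is exactly your $\beta<1$). With $\sigma=\frac{m(m-1)}{2}+O(\epsilon)$, this is arranged by discarding hyperplanes until $q=\frac{m(m+1)}{2}+1$ and then taking $\epsilon$ small.

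This also corrects where the count comes from. The curvature estimate in Step~1 needs only $q>m$. The number $\frac{m(m+1)}{2}=m+\frac{m(m-1)}{2}$ enters only through $\beta<1$, where $\frac{m(m-1)}{2}$ is the Poincaré exponent carried by the Wronskian. Nochka weights play no role for a non-degenerate map and hyperplanes in general position.
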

\begin{thm}[\cite{Fujimoto_3}, p.~366]\label{thm_gauss_map_Cm}
  Let $ S $ be a complete Riemann surface holomorphically immersed in $ \mathbb{C}^m $. If $ S $ is not contained in any affine hyperplane in $ \mathbb{C}^m $, then the complex Gauss map $ G $ of $ S $ omits at most $ \frac{m(m + 1)}{2} $ hyperplanes in $ \mathbb{P}^{m - 1} $ in general position.
\end{thm}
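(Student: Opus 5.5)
The plan follows Fujimoto's modified defect relation strategy. We argue by contradiction: suppose the complex Gauss map $G$ omits $q > \frac{m(m+1)}{2}$ hyperplanes $H_1, \dots, H_q$ in general position.

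First we pass to the universal covering. This preserves completeness, non-degeneracy and the omitted hyperplanes, so we may assume $S$ is simply connected. If $S \simeq \mathbb{C}$, Cartan's second main theorem (Borel's lemma) shows that a linearly non-degenerate entire curve in $\mathbb{P}^{m-1}$ omits at most $m \le \frac{m(m+1)}{2}$ hyperplanes in general position. Hence $S$ is the unit disc $\Delta$.

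On $\Delta$, write $g = f' = (g_1, \dots, g_m)$, so that $G = (g_1 : \cdots : g_m)$ and the induced metric is $ds^2 = \|g\|^2 |dz|^2$. The hypothesis that $S$ lies in no affine hyperplane means $g$ is linearly non-degenerate, so the Wronskian $W = W(g_1, \dots, g_m)$ is not identically zero. Each $\langle g, H_j \rangle$ is zero-free.

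The core step is Fujimoto's main lemma, a generalized Schwarz--Ahlfors lemma built from the contact functions $\varphi_k(H_j)$ of the associated curves $G_k$. It states that for suitable $\varepsilon > 0$ the function
\[
  v = \frac{\|g\|^{q - m} \, |W|}{\prod_{j} |\langle g, H_j\rangle| \, \prod_{k, j} \log\bigl(\mu/\varphi_k(H_j)\bigr)}
\]
(with the appropriate powers on the logarithmic factors) satisfies $v^{c} \le C \frac{2R}{R^2 - |z|^2}$ on every disc $\{|z| < R\}$ on which $g$ is defined. Here $c > 0$ is a normalizing exponent. The weight bookkeeping for $W$ matters: it is homogeneous of degree $m$ in $g$ and involves $1 + 2 + \cdots + (m-1) = \frac{m(m-1)}{2}$ derivatives. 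This is exactly why the critical threshold is $\frac{m(m-1)}{2} + m = \frac{m(m+1)}{2}$, and why $q > \frac{m(m+1)}{2}$ is needed to make $c$ positive and the curvature of the associated pseudo-metric bounded above by a negative constant. I expect this estimate to be the main obstacle. It requires the Cartan--Weyl/Fujimoto inequalities $dd^c \log \varphi_k(H_j) \ge \ldots$ and the choice of $\varepsilon$ so that the logarithmic terms are absorbed, followed by the Ahlfors--Schwarz comparison with the Poincaré metric.

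To derive the contradiction, consider the nowhere-vanishing holomorphic function
\[
  h = \Bigl(\frac{\prod_j \langle g, H_j\rangle}{W}\Bigr)^{1/(q - m(m+1)/2)},
\]
which is well defined on the simply connected set where $W \ne 0$. Then $|h|^2|dz|^2$ is a flat metric, and we develop it by a local isometry $\Phi$ from a maximal disc $\{|w| < R\}$. If $R = \infty$ we contradict the non-degeneracy via Liouville. If $R < \infty$, there is a radius whose image $\gamma$ in $\Delta$ is divergent. Applying the main lemma along $\gamma$ and Hölder's inequality, where the exponent condition coming from $q > \frac{m(m+1)}{2}$ makes the relevant power $< 1$, gives
\[
  \int_\gamma ds \le C' \int_0^R \Bigl(\frac{2R}{R^2 - t^2}\Bigr)^{p} dt < \infty
\]
for some $p < 1$. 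This contradicts completeness of $S$, and the theorem follows.
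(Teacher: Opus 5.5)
The paper does not prove this statement. It quotes it from Fujimoto, where it follows from his modified defect relation, and your outline is that strategy: pass to the universal cover, apply Cartan--Borel when $S\simeq\mathbb{C}$, and on $\Delta$ combine the main lemma with a flat metric and a maximal developing disc. The reductions are fine, but the last step has a genuine gap. Pull back by the developing map $\Phi$ of $|h|^2|dz|^2$ and set $\tilde g=\Phi'\cdot(g\circ\Phi)$. Your $h$ makes $|W(\tilde g)|/\prod_j|\langle\tilde g,H_j\rangle|\equiv1$, so the main lemma only gives
\[
\|\tilde g\|^{q-m}\le C\Bigl(\frac{2R}{R^2-|w|^2}\Bigr)^{m(m-1)/2}\prod_{k,j}\log\bigl(\mu/\varphi_k(H_j)\bigr)^{(\cdot)} .
\]
Nothing in your argument controls these logarithms along $\gamma$. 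They are unbounded near the loci where some $\varphi_k(H_j)$ is small, and H\"older's inequality gives no leverage without integrability information on them. Nor can they be dropped: the log-free inequality already fails for $m=2$, $q=3$. Take $g=(1,f)$ with $f:\Delta\to\mathbb{C}\setminus\{0,1\}$ the universal covering; its ratio to the Poincar\'e metric grows like $\log(1/|f|)$ at the cusp. Relatedly, $q>m(m+1)/2$ plays no role in the main lemma, which needs only $q>m$. It enters only through the exponent of the final estimate.

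The missing ingredient is the $\varepsilon$-form of Fujimoto's main lemma,
\[
\|g\|^{q-m-\varepsilon\sigma_m}\,\frac{|W|^{1+\varepsilon}\prod_{j=1}^{q}\prod_{k=0}^{m-2}|g_k(H_j)|^{\varepsilon/q}}{\prod_{j=1}^{q}|\langle g,H_j\rangle|}\le C\Bigl(\frac{2R}{R^2-|z|^2}\Bigr)^{\sigma_{m-1}+\varepsilon\tau_{m-1}},
\]
where $g_k=g\wedge g'\wedge\cdots\wedge g^{(k)}$, $\sigma_k=k(k+1)/2$ and $\tau_k=\sigma_1+\cdots+\sigma_k$. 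You also need holomorphic $\psi_{jk}\not\equiv0$ (components of $g_k(H_j)$) with $|g_k(H_j)|\ge|\psi_{jk}|$. The flat metric must absorb these factors. On the complement of the zeros of $W\prod\psi_{jk}$ take
\[
d\tau^2=\Bigl(\frac{\prod_j|\langle g,H_j\rangle|}{|W|^{1+\varepsilon}\prod_{j,k}|\psi_{jk}|^{\varepsilon/q}}\Bigr)^{2/\lambda}|dz|^2,\qquad \lambda=q-\frac{m(m+1)}{2}-\varepsilon(\sigma_m+\tau_{m-1}),
\]
with $\varepsilon$ chosen, not arbitrarily small, so that $0<\lambda\le\varepsilon/q$. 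The condition $\lambda>0$ is exactly $p=(\sigma_{m-1}+\varepsilon\tau_{m-1})/(q-m-\varepsilon\sigma_m)<1$, which gives the pointwise bound $\|\tilde g\|\le C\bigl(2R/(R^2-|w|^2)\bigr)^{p}$ with no H\"older needed. The condition $\lambda\le\varepsilon/q$ makes $d\tau^2$ complete at the zeros of the $\psi_{jk}$ and of $W$.

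That completeness is what you tacitly use when you assert that the image of the maximal radius diverges in $\Delta$. Without it the ray could end at an interior zero, where completeness of $ds^2$ gives no contradiction. Even your log-free version would first require reducing to $q=m(m+1)/2+1$ for this. Two smaller corrections: $\{W\neq0\}$ is a punctured disc, not simply connected, so define $d\tau^2$ through absolute values and take roots only on the developing disc. And for $R=\infty$ the contradiction is simply that $\Phi:\mathbb{C}\to\Delta$ is nonconstant, which Liouville forbids.
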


We note that M. Ru \cite{Ru} proved that the non-degeneracy condition in Theorem~\ref{thm_gauss_map_Rm} can be removed, provided that $S$ is non-flat.

Determining whether the number $ \frac{m(m + 1)}{2} $ is best possible is an important problem. When $ m = 3 $, Theorem~\ref{thm_gauss_map_Rm} says that the generalized Gauss map $ G $ omits at most $ \frac{3(3 + 1)}{2} = 6 $ hyperplanes in $ \mathbb{P}^{2} $. Here, the number $ 6 $ is best possible. Indeed, one can construct a non-flat complete minimal surface in $ \mathbb{R}^3 $ whose generalized Gauss map omits $ 6 $ hyperplanes in $ \mathbb{P}^2 $ in general position, using the fact that there exists a non-flat complete minimal surface $ S_0 $ in $ \mathbb{R}^3 $ such that its classical Gauss map $ g : S_0 \to \mathbb{P}^1 $ omits 4 distinct points in $ \mathbb{P}^1 $ (e.g., the Scherk surface), and that $ Q_1 $ is biholomorphic to $ \mathbb{P}^1 $. (For details, see \cite{Fujimoto_2}, pp.~279--280.)
Moreover, Fujimoto also showed that the above theorems are best possible for certain other values of $ m $.

\begin{thm}[\cite{Fujimoto_1}, p.~37; \cite{Fujimoto_3}, p.~382; Y. Kawakami \cite{Kawakami}, p.~45]\label{thm_best_possible}
  The number $ \frac{m(m + 1)}{2} $ in Theorem~\ref{thm_gauss_map_Rm} (resp. Theorem~\ref{thm_gauss_map_Cm}) is best possible for all odd integers $ m \geq 3 $ and even integers $ m $ with $ 4 \leq m \leq 16 $. That is, for each such $ m $, there exists a complete (which may be chosen to be pseudo-algebraic; cf. \cite{Kawakami}) minimal surface immersed in $ \mathbb{R}^m $ (resp. a complete Riemann surface immersed in $ \mathbb{C}^m $) whose Gauss map is non-degenerate (resp. not contained in any affine hyperplane) and omits $ \frac{m(m + 1)}{2} $ hyperplanes in $ \mathbb{P}^{m - 1} $ in general position.
\end{thm}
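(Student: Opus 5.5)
We prove Theorem~\ref{thm_best_possible} by an explicit construction on a punctured sphere, following Fujimoto's scheme. The work splits into an analytic reduction, which is the same for every $m$, and an algebraic general-position statement, which is where the restriction on $m$ enters.

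\textbf{Complex case (Theorem~\ref{thm_gauss_map_Cm}).} Fix $m$ and choose polynomials $f_1,\dots,f_N$ with $N=m(m+1)/2$ of the following kind:
\begin{itemize}
\item each $f_i$ has all its roots in a finite set $\{\alpha_1,\dots,\alpha_k\}\subset\mathbb{C}$;
\item any $m$ of them are linearly independent over $\mathbb{C}$.
\end{itemize}
On $S=\mathbb{C}\setminus\{\alpha_1,\dots,\alpha_k\}$ (or a suitable covering of it) consider the holomorphic curve
\[
F=\int (f_1\,\omega,\dots,f_m\,\omega), \qquad \omega=\frac{dz}{\prod_j (z-\alpha_j)^{n_j}},
\]
with the exponents $n_j$ chosen so that three conditions hold. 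First, every residue vanishes, so that $F$ is single-valued; if this fails, we pass to the universal cover and use the periods freely. Second, $\omega$ has poles of order at least $2$ at each puncture, so that the induced metric $|\omega|\,\|f\|$ is complete near every $\alpha_j$ and near $\infty$. Third, $df\neq0$.

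The complex Gauss map is $G=(f_1:\dots:f_m)$. Each remaining $f_i$ with $i>m$ is a linear combination $\sum_l c_{il} f_l$, and the corresponding hyperplane $H_i=\{\sum_l c_{il}w_l=0\}$ pulls back to the zero set of $f_i$. That zero set lies in the punctured set, so $G$ omits $H_i$. The coordinate hyperplanes are omitted for the same reason. General position of $H_1,\dots,H_N$ is exactly the linear-independence condition on the $f_i$. Finally, $F(S)$ is not contained in an affine hyperplane, because $f_1,\dots,f_m$ are linearly independent.

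\textbf{Real case (Theorem~\ref{thm_gauss_map_Rm}).} Here we instead use the Weierstrass-type representation with the constraint $\sum_l \phi_l^2=0$. The standard trick is to take $\phi=(\phi_1,\dots,\phi_m)$ built from $m/2$ or $(m-1)/2$ holomorphic functions in the pattern $\phi_{2l-1}=(1-g_l^2)\,\omega$, $\phi_{2l}=i(1+g_l^2)\,\omega$, arranged so that the Gauss map coordinates in a suitable basis become polynomials of the type above. Then the same omission argument applies. Completeness comes from the pole orders exactly as before, and minimality (vanishing real periods) is arranged by symmetry or by the choice of exponents.

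\textbf{Algebraic input.} For odd $m$ we use Fujimoto's explicit family, and for even $m$ we use the family of Conjecture~\ref{conj_Fujimoto}; in both cases all roots lie in $\{0,1\}$. General position then amounts to the nonvanishing of every $m\times m$ minor of the coefficient matrix of these polynomials.

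The main obstacle is exactly this nonvanishing of minors. For odd $m$, and for even $m\le 16$, we verify it as Fujimoto and Kawakami did, by direct or computer-assisted determinant evaluation, since no uniform argument is available at this stage. The analytic steps are routine once the algebraic family exists.
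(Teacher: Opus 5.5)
\textbf{There is a genuine gap in the algebraic input.} Your analytic skeleton matches the construction in the appendix: a punctured sphere, a Gauss map given by a basis of polynomials, each $H_i$ pulling back to the zero set of $f_i$, general position being linear independence, and the universal cover handling periods. But the family you feed into it does not exist as stated.

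The family of Conjecture~\ref{conj_Fujimoto} has only $3t=3m/2$ members. You need $N=m(m+1)/2=3t+m(t-1)$; for $m=4$ that is $6$ against $10$.

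In fact, once $m\ge 3$, no family of $N$ polynomials of degree $\le m-1$ in general position can have all its roots in $\{0,1\}$. View each $f_i$ as a binary form of degree $m-1$, so that a drop in degree is a root at $\infty$. The members vanishing to order $\ge c$ at a given point lie in an $(m-c)$-dimensional space, so at most $m-c$ of them can do so. Summing over $c$ and over the root set $P\subset\mathbb{P}^1$ gives $N(m-1)\le |P|\,m(m-1)/2$, i.e.\ $|P|\ge m+1$. For $P=\{0,1,\infty\}$ the bound reads $N\le 3t$, attained exactly by the conjecture's family.

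The missing step is the Lemma in the appendix. Adjoin the $t-1$ blocks $(z-a_j)^{m-i}(z-b_j)^{i-1}$ ($1\le i\le m$, $2\le j\le t$), and prove by induction on the number of blocks that suitable $a_j,b_j$ preserve general position. In the inductive step, translate so that the Wronskian of the previously chosen members is nonzero at $0$ and set $b_{u-1}=0$. Then the top coefficient in $a_{u-1}$ of each new $m\times m$ determinant is, up to a nonzero constant, that Wronskian times a Vandermonde determinant in the exponents $l_r$. The conjecture, checked by computer for $m\le 16$, is only the base case of this induction.

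For odd $m$, direct or computer-assisted verification cannot cover infinitely many $m$. What the paper cites is Fujimoto's Lemma~6.2, a uniform algebraic lemma of which the appendix Lemma is the even analogue. By the count above, it too must use at least $m+1$ points, not just $0$ and $1$.

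\textbf{The completeness recipe also fails once the correct family is used.} In the even case the punctures are the $m$ finite points $a_1,b_1,\dots,a_t,b_t$ together with $\infty$. Some $f_l$ has degree exactly $m-1$, so near $\infty$ your metric satisfies $\|f\|\,|\omega|\asymp|\zeta|^{\sum_j n_j-m-1}|d\zeta|$ with $\zeta=1/z$. Completeness at $\infty$ therefore forces $\sum_j n_j\le m$, i.e.\ $n_j=1$ for every $j$; there is no slack.

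With poles of order $\ge 2$, the point $\infty$ lies at finite distance. Filling it in gives a point where $G$ equals the vector of leading coefficients, and that point lies on $H_i$ for every $f_i$ with $\deg f_i<m-1$ (e.g.\ $f_1=1$). This is why the paper takes $\psi=1/\prod_j(z-a_j)(z-b_j)$ with simple poles, obtaining the borderline behaviour $ds\asymp|d\zeta|/|\zeta|$ at $\infty$.

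\textbf{The real case is also incomplete.} Your pattern $\phi_{2l-1}=(1-g_l^2)\omega$, $\phi_{2l}=\sqrt{-1}(1+g_l^2)\omega$ is not null, since $\phi_{2l-1}^2+\phi_{2l}^2=-4g_l^2\omega^2$. What is needed is a basis $h_1,\dots,h_m$ of the polynomials of degree $\le m-1$ with $\sum_k h_k^2=0$. The paper writes one down explicitly: the $t-1$ pairs $z^l+z^{2t-l-1}$, $\sqrt{-1}(z^l-z^{2t-l-1})$ each contribute $4z^{2t-1}$, and this is cancelled by the pair $\sqrt{-(t-1)}(z^{t-1}+z^t)$, $\sqrt{t-1}(z^{t-1}-z^t)$. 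Minimality is then automatic from the Weierstrass representation. No period condition has to be arranged by symmetry, because one works on the universal cover and completeness is inherited from $S$.
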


A key ingredient in the construction of a best possible example for Theorem~\ref{thm_best_possible} for each odd integer $ m $ is an algebraic lemma (\cite{Fujimoto_3}, pp.~382--383, Lemma~6.2) that is used to determine the desired surface and the coefficients of the omitted hyperplanes in $ \mathbb{P}^{m - 1} $.
Fujimoto conjectured that a similar construction would also work to obtain best possible examples of the above theorems for even integers $ m $. In order to prove an analogue of this algebraic lemma for the even-dimensional case (given in the appendix), he proposed the Fujimoto conjecture (Conjecture~\ref{conj_Fujimoto}) and verified by computer that it is true for all even integers $ m \leq 16 $.

In this paper, we show that the conjecture holds for all even integers $ m \geq 2 $.
\begin{thm}[\textbf{Main Theorem}]\label{thm_main_theorem}
  The Fujimoto conjecture (Conjecture~\ref{conj_Fujimoto}) is true for all even integers $ m \geq 2 $.
\end{thm}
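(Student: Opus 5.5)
The plan is to reduce the conjecture to a statement about minors of a single $3t \times m$ matrix. I would then show that every maximal minor of that matrix is, up to a sign depending only on the chosen rows, a sum of positive terms: a generating function of non-intersecting path families in a planar network. Nonvanishing then reduces to exhibiting one such family.

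\emph{Step 1 (homogenization).} All $3t$ polynomials have degree $\le m-1$. Put $x=z$ and $y=z-1$, so that $1=x-y$, and view each $f_i$ as a binary form of degree $m-1$ in $(x,y)$. A short check of exponents gives three families:
\begin{equation*}
  x^a(x-y)^{m-1-a}\ (0\le a\le t-1),\qquad y^j(x-y)^{m-1-j}\ (t\le j\le m-1),\qquad x^k y^{m-1-k}\ (t\le k\le m-1).
\end{equation*}
Linear independence over $\mathbb{C}$ is unchanged by this substitution. Expanding in the basis $x^\ell y^{m-1-\ell}$, $0\le \ell\le m-1$, gives a $3t\times m$ coefficient matrix $M$.
\begin{itemize}
  \item In the first block, row $a$ has binomial entries $\binom{m-1-a}{\ell-a}(-1)^{m-1-\ell}$ for $\ell\ge a$.
  \item In the second block, row $j$ has entries $\binom{m-1-j}{\ell}(-1)^{m-1-j-\ell}$ for $\ell\le m-1-j$.
  \item The third block consists of unit rows $e_k$.
\end{itemize}
The conjecture says exactly that every $m\times m$ minor of $M$ is nonzero. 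Laplace-expanding along the unit rows of the third block reduces this to the following claim. For every $A\subseteq\{0,\dots,t-1\}$, $B\subseteq\{t,\dots,m-1\}$ and column set $L\supseteq\{0,\dots,t-1\}$ with $|L|=|A|+|B|$ (the columns not killed by the chosen unit rows), the minor of the first two blocks on rows $A\cup B$ and columns $L$ is nonzero.

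\emph{Step 2 (sign normalization and a planar network).} I would multiply rows and columns by $\pm1$ so that all entries become nonnegative binomial coefficients. The order of rows and columns should interleave the two blocks: first-block rows are upper-triangular Pascal-type and second-block rows are lower-triangular Pascal-type. I would then build a planar network $(\Gamma_0,\omega)$ with sources indexed by the $2t$ rows and sinks indexed by the $m$ columns. The $a$-th source sits at the bottom of a Pascal-triangle grid and the $j$-th source at the top of a reflected grid, so that the weighted path count from each source to each sink equals the corresponding binomial coefficient. Binomial matrices are classically realized this way, by lattice paths with unit weights. The real work is gluing the two triangular grids into one planar graph in which the sources and sinks appear on the boundary in the chosen order, which the paper's $\Gamma_0$ presumably does. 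By the Lindstr\"om--Gessel--Viennot lemma, each minor in Step 1 is then $\pm$ the number of vertex-disjoint path families from the chosen sources to the chosen sinks. The sign depends only on $(A,B,L)$, via the normalizations and reordering.

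\emph{Step 3 (existence of a disjoint path family).} It remains to show that for every admissible $(A,B,L)$ at least one non-intersecting family exists. I would construct one greedily: route the sources in $A$ to the smallest available sinks in $L$ through the lower grid, and the sources in $B$ to the remaining sinks through the upper grid. The fact that $L\supseteq\{0,\dots,t-1\}$ and the counting $|L|=|A|+|B|$ should supply the Hall-type inequalities that make the routing possible.

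I expect the main obstacle to be Step 2 rather than Step 3. The difficulty is finding a single planar embedding in which both Pascal blocks, with their opposite triangularity and alternating signs, become simultaneously positive with compatible boundary orders. If one global network fails, the fallback is to treat the first-block and second-block submatrices as a product of two totally nonnegative matrices, then apply Cauchy--Binet together with a positivity argument for each summand.
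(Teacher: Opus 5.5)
Your reduction is sound, and it is the paper's own in disguise. After your sign normalization, take the $2t\times m$ matrix of the first two blocks, with rows ordered $j=m-1,\dots,t$ followed by $a=0,\dots,t-1$ and columns $\ell=0,\dots,m-1$. It is exactly the paper's $\mathsf{M}$ with both its rows and its columns reversed, and your condition $L\supseteq\{0,\dots,t-1\}$ is the paper's $J\supseteq\{(t+1)',\dots,m'\}$.

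The gap is that Step~2, which carries essentially all the mathematical content, is never carried out. You need every minor of this matrix to have one fixed sign, i.e.\ total nonnegativity in a suitable order. Without that, the terms of your Laplace expansion have mixed signs, and exhibiting one non-intersecting family in Step~3 proves nothing. You only say that some gluing of a Pascal grid with a reflected one ``presumably'' does this, deferring to $\Gamma_0$, and your Cauchy--Binet fallback is equally unspecified. The paper does not glue unit-weight grids. It puts explicit non-unit weights on $\Gamma_0$ (e.g.\ $r_{i,j}=\frac{i+t-j}{i+j-1}$ for $i,j\le t$) and proves $x(\Gamma_0,\omega_0)=\mathsf{M}$ (Theorem~\ref{thm_M}) via a Saalsch\"utz ${}_3F_2$ evaluation (Lemma~\ref{lem_weight}) and two Vandermonde convolutions. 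Nothing in your proposal replaces this.

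In your basis the missing step is actually easy, and filling it this way would give a genuinely simpler route than the paper's. Take the unit-weight lattice with north and east steps on the triangle $\{(X,Y)\in\mathbb{Z}_{\ge 0}^2 : X+Y\le m-1\}$. Put the source of row $a$ at $(0,a)$, the source of row $j$ at $(j,0)$, and the sink of column $\ell$ at $(m-1-\ell,\ell)$. The path counts are then exactly $\binom{m-1-a}{\ell-a}$ and $\binom{m-1-j}{\ell}$. The sources lie on the two legs and the sinks on the hypotenuse, so Theorem~\ref{thm_LGV} applies directly in the row order above, with no gluing and no hypergeometric identities.

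Step~3 needs repair, though. Non-crossing forces the $B$-sources, whose rows are supported on $\ell\le m-1-j\le t-1$, onto the smallest sinks $0,\dots,|B|-1$. That is the reverse of your routing, and already $m=2$, $A=\{0\}$, $B=\{1\}$, $L=\{0,1\}$ defeats it. Also, Hall-type counting only yields a matching, not vertex-disjoint paths. An explicit family works as follows.
\begin{itemize}
  \item For $j_1>\dots>j_b$ in $B$, send $(j_k,0)$ north to row $k-1$, then east to $(m-k,k-1)$. These paths stay in columns $\ge t$ and rows $<b$.
  \item For $a_1<\dots<a_\alpha$ in $A$ with remaining sinks $\lambda_1<\dots<\lambda_\alpha$, send $(0,a_k)$ east to column $\alpha-k\le t-1$, then north to row $\lambda_k$, then east to the sink.
\end{itemize}
Disjointness follows from $\lambda_k\ge\max(a_k,b)$, which in turn follows from $L\supseteq\{0,\dots,t-1\}$ and $|A|+|B|\ge t$.
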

Therefore, the following expectation of Fujimoto is fully justified.
\begin{cor}[\cite{Fujimoto_3}, p.~384; \cite{Fujimoto_1}, pp.~42--43; \cite{Fujimoto_7}, pp.~198--199]\label{cor_best_possible}
  The number $ \frac{m(m + 1)}{2} $ in Theorems~\ref{thm_gauss_map_Rm} and~\ref{thm_gauss_map_Cm} is best possible for all integers $ m \geq 3 $.
  Moreover, an example for Theorem~\ref{thm_gauss_map_Rm} can be chosen to be a pseudo-algebraic minimal surface in $ \mathbb{R}^m $.
\end{cor}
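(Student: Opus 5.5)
The plan is to split according to the parity and size of $m$. For odd $m \geq 3$, and for even $m$ with $4 \leq m \leq 16$, the claim (including the pseudo-algebraic refinement) is exactly Theorem~\ref{thm_best_possible}, so nothing new is needed there. The work is the case of even $m \geq 4$ in general. Here I will run Fujimoto's construction for the odd case (\cite{Fujimoto_3}, Lemma~6.2 and \S6) in the even-dimensional setting. The only input that was missing is the general position of the $3t$ polynomials $f_1,\dots,f_{3t}$, which is now supplied by Theorem~\ref{thm_main_theorem}.

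\textbf{Step 1: the even-dimensional algebraic lemma.} Fujimoto reduced his construction to an algebraic lemma, the even analogue of Lemma~6.2, stated in the appendix. Its hypothesis is precisely that the polynomials of Conjecture~\ref{conj_Fujimoto} are in general position, so by Theorem~\ref{thm_main_theorem} this lemma holds for every even $m$. Note that each $f_i$ has degree at most $m-1$.

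From the lemma I would extract the following data:
\begin{itemize}
\item a holomorphic map $G = (g_1 : \cdots : g_m)$ from a punctured sphere (a domain of the form $\mathbb{C}\setminus\{0,1\}$, or a suitable finite branched cover) into $Q_{m-2}$, whose coordinates $g_k$ are polynomial combinations of the $f_i$;
\item $\frac{m(m+1)}{2}$ hyperplanes $H_j$ in general position.
\end{itemize}
The hyperplanes are chosen so that each pull-back $\langle G, H_j\rangle$ is, up to a constant, a product of powers of $z$ and $z-1$. It therefore has no zeros on the surface, which means $G$ omits every $H_j$.

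General position of the $H_j$ comes from two sources: the general position of the $f_i$, and the freedom in the auxiliary coefficients of the lemma, chosen generically. Non-degeneracy of $G$ follows from the linear independence of any $m$ of the $f_i$.

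\textbf{Step 2: building the surface.} Following \cite{Fujimoto_3} (and \cite{Kawakami} for the pseudo-algebraic version), I would realize $G$ as the Gauss map of a minimal surface. Set $x = \mathrm{Re}\int G\,\omega$ for a holomorphic $1$-form $\omega = h(z)\,dz$, where $h$ is a product of powers of $z$ and $z-1$.

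The exponents in $h$ have to do three things:
\begin{itemize}
\item make the induced metric $|G|^2|\omega|^2$ complete near each puncture, where a pole of sufficiently high order makes the divergence of length automatic;
\item kill the real periods around $0$ and $1$, which is done either by passing to the universal cover, or by adjusting residues and taking the branched cover as Fujimoto does;
\item avoid any zeros of $\omega$ inside the surface.
\end{itemize}
The result is a complete minimal surface in $\mathbb{R}^m$ with non-degenerate Gauss map omitting $\frac{m(m+1)}{2}$ hyperplanes in general position. The complex version for Theorem~\ref{thm_gauss_map_Cm} comes from the same data by taking $\int G\,\omega$ without real parts and dropping the quadric condition. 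Being defined by algebraic data on a punctured compact Riemann surface, the example is pseudo-algebraic.

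\textbf{Main obstacle.} The genuinely hard input, general position, is already settled by Theorem~\ref{thm_main_theorem}. What remains is checking that Fujimoto's odd-case period-killing and completeness arguments transfer verbatim once the appendix lemma is available. In particular, one must check that the orders of the factors of $\langle G, H_j\rangle$ at $0$, $1$, $\infty$ are compatible with the chosen $\omega$, so that completeness and the absence of zeros hold simultaneously. I would verify this order bookkeeping explicitly, using the degree bound $\deg f_i \leq m-1$.
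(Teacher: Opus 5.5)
Your reduction to even $m\ge 4$ and your identification of Theorem~\ref{thm_main_theorem} as the missing input are right. However, Step~1 misstates what the appendix lemma delivers, and as a result the surface in Step~2 cannot omit $\frac{m(m+1)}{2}$ hyperplanes.

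The polynomials of Conjecture~\ref{conj_Fujimoto} give only $3t=\frac{3m}{2}$ hyperplanes, and no family whose pull-backs are all of the form $c\,z^a(z-1)^b$ can do better. Take $G=(h_1:\cdots:h_m)$ with $h_k$ a basis of polynomials of degree $\le m-1$, as in Fujimoto's construction. Then an omitted hyperplane on $\mathbb{C}\setminus\{0,1\}$ corresponds to some $z^a(z-1)^b$ with $a+b\le m-1$. There are exactly $\frac{m(m+1)}{2}$ of these, so you would need all of them, yet $z$, $z^2$, $z(z-1)$ are linearly dependent.

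The remaining $m(t-1)$ hyperplanes come from the polynomials $(z-a_j)^{m-i}(z-b_j)^{i-1}$ ($2\le j\le t$, $1\le i\le m$), which vanish at new points. The appendix lemma says $a_j,b_j$ can be chosen so that all $\frac{m(m+1)}{2}$ polynomials are in general position. Its proof is not a formal consequence of Theorem~\ref{thm_main_theorem}. It is an induction on $j$ with Theorem~\ref{thm_main_theorem} as the base case. At each step one translates so that the Wronskian of the already-present polynomials is nonzero at $0$, and sets $b_{u-1}=0$. One then shows each $m\times m$ determinant is a nonzero polynomial in $a_{u-1}$: its top-degree coefficient is that Wronskian minor times a Vandermonde-type determinant in the exponents $l_r$.

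Accordingly, the surface is $S=\mathbb{C}\setminus\{a_1,b_1,\dots,a_t,b_t\}$, and the $1$-form is $\psi\,dz$ with $\psi=\prod_j (z-a_j)^{-1}(z-b_j)^{-1}$, not a product of powers of $z$ and $z-1$. The simple poles at all $m$ punctures, together with $\deg h_k\le m-1$, give $ds^2\asymp|z-c|^{-2}|dz|^2$ near each puncture $c$ and $ds^2\asymp|d\zeta|^2/|\zeta|^2$ near $\infty$, hence completeness.

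Two further corrections. First, killing periods by adjusting residues on a finite branched cover is not available. It would produce a complete minimal surface of finite total curvature, whose Gauss map omits at most $\frac{(m-1)(m+2)}{2}<\frac{m(m+1)}{2}$ hyperplanes in general position, by Chern--Osserman and Ru as recalled in the Questions section. You must pass to the universal cover $\pi:\widetilde S\to S$, where no period condition arises. This is exactly why the example is pseudo-algebraic rather than algebraic.

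Second, you never ensure that $G$ lands in $Q_{m-2}$. The paper chooses an explicit basis $h_1,\dots,h_m$ of polynomials of degree $\le m-1$ with $\sum_k h_k^2=0$, built from $z^l\pm z^{m-1-l}$, and takes $\phi_k=\psi h_k\,dz$. It then defines $H_i$ by expanding $f_i=\sum_j c_{ij}h_j$. With this setup:
\begin{itemize}
\item general position of the $H_i$ is literally the conclusion of the lemma;
\item non-degeneracy of $G$ is the linear independence of the $h_k$;
\item each $\langle G,H_i\rangle=f_i\circ\pi$ is zero-free on $\widetilde S$.
\end{itemize}
No additional generic choice of coefficients is involved.
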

\begin{proof}
  We can construct these examples in a similar way to that in \cite{Fujimoto_3}, p.~382 and p.~384, \cite{Fujimoto_1}, pp.~39--42, and \cite{Kawakami}, pp.~45--47. See the appendix for details.
\end{proof}

For the remaining problems, see the Questions section.
As noted by Osserman and Ru \cite{Osserman-Ru}, p.~579, the following result, which is stronger than Theorem~\ref{thm_gauss_map_Rm}, is also best possible.
\begin{cor}[\cite{Osserman-Ru}, p.~278]
  Let $ S $ be a minimal surface immersed in $ \mathbb{R}^m $. For $ p \in S $, we denote its Gauss curvature at $ p $ by $ K(p) $, and the geodesic distance from $ p $ to the boundary of $ S $ by $ d(p) $. If the Gauss map of $ S $ omits more than $ \frac{m(m + 1)}{2} $ hyperplanes in $ \mathbb{P}^{m - 1} $ in general position, then there exists a constant $ C $ (which depends only on the hyperplanes) such that
  \begin{equation*}
    |K(p)|^{\frac{1}{2}}d(p) \leq C.
  \end{equation*}
  Moreover, the bound $ \frac{m(m + 1)}{2} $ is optimal.
\end{cor}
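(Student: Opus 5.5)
The plan is to separate the two assertions. The curvature estimate itself is the theorem of Osserman and Ru, so I will simply cite it. The real content is the optimality claim, which I will deduce from Corollary~\ref{cor_best_possible}, and hence ultimately from Theorem~\ref{thm_main_theorem}.

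Optimality should mean the following: for $m(m+1)/2$ hyperplanes in general position, no uniform estimate $|K(p)|^{1/2}d(p)\le C$ can hold. To show this, I take the complete minimal surface $S$ in $\mathbb{R}^m$ supplied by Corollary~\ref{cor_best_possible}. Its Gauss map $G$ is non-degenerate and omits exactly $\frac{m(m+1)}{2}$ hyperplanes $H_1,\dots,H_{m(m+1)/2}$ in general position. For $m=3$ one may instead use the Scherk-type example described above, but the corollary already covers all $m\ge 3$.

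Since $S$ is complete, it has no boundary, so $d(p)=+\infty$ for every $p\in S$. Suppose an estimate $|K(p)|^{1/2}d(p)\le C$ held for surfaces whose Gauss maps omit these $\frac{m(m+1)}{2}$ hyperplanes. Then it would force $K\equiv 0$ on $S$, i.e. $S$ would be flat.

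It remains to check that $S$ is not flat, and this is the only point requiring care. For a minimal surface the Gauss curvature is $K=-\tfrac12\|\Delta\text{-type derivative of } G\|^2$ in suitable normalization. More concretely, $K\equiv 0$ holds if and only if the Gauss map $G$ is constant, since $K$ is a negative multiple of the pullback of the Fubini--Study metric under $G$. A constant map has image a single point, which lies in some hyperplane of $\mathbb{P}^{m-1}$. This contradicts the non-degeneracy of $G$. Hence $K\not\equiv 0$, and so $\sup_p |K(p)|^{1/2}d(p)=+\infty$.

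Therefore the bound fails when exactly $\frac{m(m+1)}{2}$ hyperplanes are omitted, and the threshold "more than $\frac{m(m+1)}{2}$" cannot be lowered. I expect no genuine obstacle here: all the depth lies in the existence of the example, which Corollary~\ref{cor_best_possible} provides.
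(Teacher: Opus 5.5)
Your argument is correct and is essentially the paper's: you take the complete, non-degenerate example from Corollary~\ref{cor_best_possible}, note that $d(p)=\infty$ everywhere so the estimate would force $K\equiv 0$, and derive a contradiction. The only difference is cosmetic and lies in the last step: the paper cites Fujimoto's result that $K\equiv 0$ forces $S$ to lie in a plane, while you use that $-K\,ds^2$ is a positive multiple of $G^{*}$ of the Fubini--Study metric, so $G$ would be constant, hence contained in a hyperplane, contradicting non-degeneracy. Your first, ``$\Delta$-type'' formula is not a real formula and should be dropped in favor of this precise statement.
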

\begin{proof}
  Here, we only verify the sharpness of this result.
  For each $ m \geq 3 $, let $ S $ be a best possible example of Corollary~\ref{cor_best_possible} in $ \mathbb{R}^m $. Suppose that the above inequality holds for $ S $. Since $ d(p) = \infty $ for all $ p \in S $, we have $ K \equiv 0 $. This implies that $ S $ must lie on a plane (see, for example, \cite{Fujimoto_7}, p.~15, Proposition~2.13), which is a contradiction.
\end{proof}

Besides the introduction, this paper contains two sections, some questions, and an appendix.
From the viewpoint of the theory of positive matrices, the Fujimoto conjecture can be reduced to the construction of a certain weighted directed graph (called a \emph{planar network}). After reviewing the basic parts of the theory of positive matrices needed in this paper (Section~\ref{sec_2}), we construct a planar network with nonnegative weights representing the matrix $ \mathsf{M} $ corresponding to the $ 3t $ polynomials $ f_i(z) $ (Section~\ref{sec_3}). In Fujimoto's paper \cite{Fujimoto_3}, the details of the proof of the existence of the auxiliary polynomials used in the construction of best possible examples for Theorem~\ref{thm_best_possible} for each even integer $ m $ are omitted to avoid repetition. We therefore provide a proof of this omitted part and construct a best possible example for Theorem~\ref{thm_gauss_map_Rm} in the appendix, following his approach.

\section{Totally positive matrix and totally nonnegative matrix}\label{sec_2}
In this section, we recall some basic results of the theory of totally positive and totally nonnegative matrices. This theory fits very well with the situation we are considering. We refer the reader to \cite{Fomin-Zelevinsky} by S. Fomin and A. Zelevinsky, which provides a detailed account of the subject.

Let $ M $ be an $ n \times n $ matrix, and $ I, J \subset \{1, 2, \ldots, n\} $ with $ |I| = |J| $. We denote  by $ M_{I, J} $ the submatrix obtained from $ M $ by deleting the rows indexed by $ \{1, 2, \ldots, n\} \setminus I $ and the columns indexed by $ \{1, 2, \ldots, n\}\setminus J $. The determinant of $ M_{I, J} $ is called the minor of $ M $ corresponding to $ I $ and $ J $, and is denoted by $ \Delta_{I, J} $.
\begin{defn}[Totally positive matrix, totally nonnegative matrix]
  Let $ M $ be a (real) matrix. The matrix $ M $ is \textbf{totally positive} (resp. \textbf{totally nonnegative}) if all minors are positive (resp. nonnegative).
\end{defn}
\begin{defn}[Planar network]
  Let $ \Gamma $ be a directed acyclic planar graph. We denote the set of edges (resp. vertices) of $ \Gamma $ by $ E(\Gamma) $ (resp. $ V(\Gamma) $). A function $ \omega : E(\Gamma) \to \mathbb{R} $ is called an edge weight function, and the pair $ (\Gamma, \omega) $ is called a \textbf{planar network}. For each integer $ n \geq 1 $, we define a special planar network $ (\Gamma_0, \omega = (l_{i, j}, m_k, r_{i, j})) $ as in Figure~\ref{fig_Gamma_0}, where $ \Gamma_0 $ is directed from left to right, and $ l_{i, j} $, $ m_k $, and $ r_{i, j} \ ( 1 \leq i, j \leq n - 1, 2 \leq i + j \leq n, 1 \leq k \leq n ) $ are real numbers assigned to the red, green, and blue edges, respectively. Each black edge is always assigned the weight $ 1 $.
  \begin{figure}[ht]
    \setlength{\unitlength}{1.2pt}
    \begin{center}
      \begin{picture}(180,85)(0,-5)
        \thicklines

        \put(0,0){\line(1,0){80}}
        \put(0,20){\line(1,0){80}}
        \put(0,40){\line(1,0){80}}
        \put(0,60){\line(1,0){80}}
        \put(0,80){\line(1,0){80}}

        \green{
          \put(80,0){\line(1,0){20}}
          \put(80,20){\line(1,0){20}}
          \put(80,40){\line(1,0){20}}
          \put(80,60){\line(1,0){20}}
          \put(80,80){\line(1,0){20}}
        }

        \put(84,2.5){$ m_1 $}
        \put(84,22.5){$ m_2 $}
        \put(84,42.5){$ m_3 $}
        \put(87.5,50){$ \vdots $}
        \put(84,62.5){$ m_{n - 1} $}
        \put(84,82.5){$ m_n $}

        \put(100,0){\line(1,0){80}}
        \put(100,20){\line(1,0){80}}
        \put(100,40){\line(1,0){80}}
        \put(100,60){\line(1,0){80}}
        \put(100,80){\line(1,0){80}}

        \blue{
          \put(100,0){\line(1,1){80}}
          \put(100,20){\line(1,1){60}}
          \put(100,40){\line(1,1){40}}
          \put(100,60){\line(1,1){20}}
        }

        \put(107.5,7.5){$ r_{1, 1} $}
        \put(107.5,27.5){$ r_{2, 1} $}
        \put(107.5,47.5){$ \vdots $}
        \put(97.5,67.5){$ r_{n - 1, 1} $}

        \put(127.5,27.5){$ r_{1, 2} $}
        \put(127.5,47.5){$ \vdots $}
        \put(122.5,67.5){$ r_{n - 2, 2} $}

        \put(147.5,47.5){$ \vdots $}
        \put(147.5,67.5){$ \cdots $}

        \put(167.5,67.5){$ r_{1, n - 1} $}

        \thinlines
        \red{
          \put(60,80){\line(1,-1){20}}
          \put(40,80){\line(1,-1){40}}
          \put(20,80){\line(1,-1){60}}
          \put(0,80){\line(1,-1){80}}
        }

        \put(67.5,7.5){$ l_{1, 1} $}
        \put(67.5,27.5){$ l_{2, 1} $}
        \put(67.5,47.5){$ \vdots $}
        \put(62,67.5){$ l_{n - 1, 1} $}

        \put(47.5,27.5){$ l_{1, 2} $}
        \put(47.5,47.5){$ \vdots $}
        \put(40,67.5){$ l_{n - 2, 2} $}

        \put(27.5,47.5){$ \vdots $}
        \put(27.5,67.5){$ \cdots $}

        \put(6,67.5){$ l_{1, n - 1} $}

        \put(185,-2){$ 1' $}
        \put(185,18){$ 2' $}
        \put(185,38){$ 3' $}
        \put(185,48){$ \vdots $}
        \put(185,58){$ (n - 1)' $}
        \put(185,78){$ n' $}

        \put(-9,-2){$ 1 $}
        \put(-9,18){$ 2 $}
        \put(-9,38){$ 3 $}
        \put(-9,48){$ \vdots $}
        \put(-26,58){$ n - 1 $}
        \put(-9,78){$ n $}

        \put(0,0){\circle*{2.5}}
        \put(0,20){\circle*{2.5}}
        \put(0,40){\circle*{2.5}}
        \put(0,60){\circle*{2.5}}
        \put(0,80){\circle*{2.5}}

        \put(20,60){\circle*{2.5}}
        \put(20,80){\circle*{2.5}}

        \put(40,40){\circle*{2.5}}
        \put(40,60){\circle*{2.5}}
        \put(40,80){\circle*{2.5}}

        \put(60,20){\circle*{2.5}}
        \put(60,40){\circle*{2.5}}
        \put(60,60){\circle*{2.5}}
        \put(60,80){\circle*{2.5}}

        \put(80,0){\circle*{2.5}}
        \put(80,20){\circle*{2.5}}
        \put(80,40){\circle*{2.5}}
        \put(80,60){\circle*{2.5}}
        \put(80,80){\circle*{2.5}}

        \put(100,0){\circle*{2.5}}
        \put(100,20){\circle*{2.5}}
        \put(100,40){\circle*{2.5}}
        \put(100,60){\circle*{2.5}}
        \put(100,80){\circle*{2.5}}

        \put(120,20){\circle*{2.5}}
        \put(120,40){\circle*{2.5}}
        \put(120,60){\circle*{2.5}}
        \put(120,80){\circle*{2.5}}

        \put(140,40){\circle*{2.5}}
        \put(140,60){\circle*{2.5}}
        \put(140,80){\circle*{2.5}}

        \put(160,60){\circle*{2.5}}
        \put(160,80){\circle*{2.5}}

        \put(180,0){\circle*{2.5}}
        \put(180,20){\circle*{2.5}}
        \put(180,40){\circle*{2.5}}
        \put(180,60){\circle*{2.5}}
        \put(180,80){\circle*{2.5}}
      \end{picture}
    \end{center}
    \caption{Planar network $(\Gamma_0, (l_{i, j}, m_k, r_{i, j}))$}
    \label{fig_Gamma_0}
  \end{figure}
\end{defn}
\begin{defn}[Weight matrix]
  Let $ (\Gamma, \omega) $ be a finite planar network with subsets $ I = \{v_1, v_2, \ldots, v_n\} $ and $ J = \{w_1, w_2, \ldots, w_n\} $ of the set $ V(\Gamma) $. Then the $ (i, j) $-entry of its \textbf{weight matrix} $ x(\Gamma, \omega) $ is defined as the sum of the weights of all paths from $ v_i $ to $ w_j $, where the weight of a path means the product of all the weights on the path.
\end{defn}
\begin{thm}[\textbf{Lindstr\"om--Gessel--Viennot lemma}, B. Lindstr\"om \cite{Lindstrom}, pp.~87--88, I. Geesel and G. X. Viennot \cite{Gessel-Viennot}, pp.~302--303]\label{thm_LGV}
  Let $ (\Gamma, \omega) $ be a finite planar network with subsets $ I $ and $ J $ of the set $ V(\Gamma) $ satisfying the following conditions:
  \begin{itemize}
    \item $ n \coloneqq |I| = |J| $.
    \item Each vertex $ v $ in $ I $ (resp. $ J $) is labelled by an integer from $ 1 $ to $ n $: $ I = \{v_1, v_2, \ldots, v_n\} $ (resp. $ J = \{w_1, w_2, \ldots, w_n\} $) so that if $ i_1 > i_2 $ and $ j_1 < j_2 $ $ (1 \leq i_1, i_2, j_1, j_2 \leq n) $, then any two paths from $ v_{i_1} $ to $ w_{j_1} $ and from $ v_{i_2} $ to $ w_{j_2} $ intersect.
  \end{itemize}
  Then each minor $ \Delta_{I', J'} \ (I' \subset I, J' \subset J, |I'| = |J'|) $ of the weight matrix $ x(\Gamma, \omega) $ equals the sum of the weights of all collections of non-intersecting paths from $ I' $ to $ J' $, where the weight of a collection of non-intersecting paths means the product of all the weights on these paths.
\end{thm}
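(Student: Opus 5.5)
The statement just before this point is the Lindstr\"om--Gessel--Viennot lemma (Theorem~\ref{thm_LGV}). I would prove it by the standard sign-reversing involution argument, expanding the minor as a signed sum over path families.

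First fix $I' = \{v_{i_1},\dots,v_{i_k}\}$ and $J' = \{w_{j_1},\dots,w_{j_k}\}$, with $i_1<\dots<i_k$ and $j_1<\dots<j_k$. Expanding the determinant with the Leibniz formula and substituting the definition of the weight matrix gives
\begin{equation*}
  \Delta_{I',J'} = \sum_{\sigma \in S_k} \operatorname{sgn}(\sigma) \sum_{(P_1,\dots,P_k)} \prod_{a=1}^k \omega(P_a),
\end{equation*}
where $P_a$ ranges over paths from $v_{i_a}$ to $w_{j_{\sigma(a)}}$. So the minor is a signed sum over pairs $(\sigma, \mathbf{P})$ with weight $\operatorname{sgn}(\sigma)\prod_a \omega(P_a)$. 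Finiteness and acyclicity of $\Gamma$ ensure all these sums are finite.

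Next, I would define an involution on the pairs for which some two paths share a vertex. Take the smallest index $a$ such that $P_a$ meets some other path. Let $x$ be the first vertex along $P_a$ lying on another path, and let $b$ be the smallest index $b\neq a$ with $x\in P_b$. Swap the tails of $P_a$ and $P_b$ after $x$, and replace $\sigma$ by $\sigma\circ(a\,b)$. The result is again a family of paths, because $\Gamma$ is acyclic, so concatenating a head ending at $x$ with a tail starting at $x$ still gives a path. The multiset of edges is unchanged, hence the weight is unchanged, while the sign flips.

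The main obstacle is checking that this map really is an involution, i.e.\ that the same $a$, $x$, $b$ are selected after the swap. The set of vertices covered by the family is unchanged by the swap. Hence the minimal intersecting index $a$ is unchanged, and so is the first shared vertex $x$ on the new $P_a$, since its initial segment up to $x$ is unchanged. The set of paths through $x$ is also unchanged, so the same $b$ is chosen. Consequently all intersecting families cancel in pairs, and only vertex-disjoint families survive.

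Finally, I would use the planarity hypothesis to show that every surviving family has $\sigma=\mathrm{id}$. If $\sigma\neq\mathrm{id}$, there are $a<c$ with $\sigma(a)>\sigma(c)$. Then $P_c$ goes from $v_{i_c}$, which has the larger source index, to $w_{j_{\sigma(c)}}$, which has the smaller target index, while $P_a$ goes from $v_{i_a}$ to $w_{j_{\sigma(a)}}$. The hypothesis forces these two paths to intersect, a contradiction. Therefore every surviving term has positive sign, and $\Delta_{I',J'}$ equals the sum of the weights of non-intersecting path collections from $I'$ to $J'$, as claimed.
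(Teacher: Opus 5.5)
Your proposal is correct: it is the classical Gessel--Viennot sign-reversing (tail-swapping) involution, which is the proof in the sources the paper cites (the paper states the lemma without proof).

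One step in your involution check should be made explicit. That $x$ is still the first shared vertex of the new $P_a$ does not follow just from its initial segment being unchanged. It holds because the new $P_b$ consists of two pieces:
\begin{itemize}
\item the head of the old $P_b$, which misses the part of $P_a$ before $x$ by the choice of $x$;
\item the tail of the old $P_a$, which misses that part because a path in an acyclic graph never revisits a vertex.
\end{itemize}
Similarly, note that $b>a$ holds automatically, since $P_b$ meets $P_a$. Hence the paths $P_c$ with $c<a$ are untouched by the swap, which is what makes $a$ stay minimal.
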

\begin{ex}
  Consider the $ n' \times n' $ grid $ \Gamma $. Let $ V(\Gamma) = \{(i, j) \in \mathbb{Z}^2 \mid 1 \leq i, j \leq n' + 1\} $ and let $ E(\Gamma) = \{((i, j), (i', j')) \in V(\Gamma) \times V(\Gamma) \mid (i', j') - (i, j) = (1, 0) \ \text{or} \ (0, 1)\} $. Then $ (\Gamma, \mathsf{1}) $ is a planar network, where $ \mathsf{1} $ denotes the constant weight function with value $ 1 $. Fix an integer $ 1 \leq n \leq n' + 1 $, and let
  \begin{equation*}
    I = \{v_i = (1, i) \mid 1 \leq i \leq n\}, \quad J = \{w_i = (n' - i + 2, n' + 1) \mid 1 \leq i \leq n\}.
  \end{equation*}
  Then $ I $ and $ J $ satisfy the conditions of Theorem~\ref{thm_LGV}, and it is easy to see that
  \begin{equation*}
    x(\Gamma, \mathsf{1}) = \left(\binom{2n' - i - j + 2}{n' - i + 1}\right)_{1 \leq i, j \leq n}.
  \end{equation*}
  Therefore, for $ I' = \{v_{i_1}, \ldots, v_{i_s}\} \subset I $ $ (1 \leq i_1 < \cdots < i_s \leq n)$ and $ J' = \{w_{j_1}, \ldots, w_{j_s}\} \subset J $ $ (1 \leq j_1 < \cdots < j_s \leq n) $, we have
  \begin{equation*}
    |\{\text{collections of non-intersecting paths from $ I' $ to $ J' $} \ \text{in} \  \Gamma \}| = \det \left(\binom{2n' - i_k - j_l + 2}{n' - i_k + 1}\right)_{1 \leq k, l \leq s}.
  \end{equation*}
\end{ex}
The Lindstr\"om--Gessel--Viennot lemma (Theorem~\ref{thm_LGV}) implies the following.
\begin{thm}[\cite{Fomin-Zelevinsky}, p.~24]
  Let $ (\Gamma, \omega) $ be a finite planar network, and let $ I $, $ J $ be subsets of the set $ V(\Gamma) $ satisfying the conditions of Theorem~\ref{thm_LGV}. If $ \omega $ is a nonnegative edge weight function, then $ x(\Gamma, \omega) $ is totally nonnegative. If $ \omega $ is a positive edge weight function and there exists at least one collection of non-intersecting paths from $ I' $ to $ J' $, for all $ I' \subset I $, $ J' \subset J $ with $ |I'| = |J'| $, then $ x(\Gamma, \omega) $ is totally positive.
\end{thm}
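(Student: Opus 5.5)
The plan is to read both assertions directly off the Lindstr\"om--Gessel--Viennot lemma (Theorem~\ref{thm_LGV}). The only point to watch is that no signs appear in the combinatorial expansion of a minor, and the hypotheses of Theorem~\ref{thm_LGV} already secure this.

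Fix $ I' = \{v_{i_1}, \ldots, v_{i_s}\} \subset I $ and $ J' = \{w_{j_1}, \ldots, w_{j_s}\} \subset J $ with $ i_1 < \cdots < i_s $ and $ j_1 < \cdots < j_s $. Rows and columns of the submatrix of $ x(\Gamma, \omega) $ are taken in this increasing order, which is the order used to define $ \Delta_{I', J'} $.

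The first step is to recall why the expansion is sign-free. The general Lindstr\"om--Gessel--Viennot expansion writes $ \Delta_{I', J'} $ as a signed sum over permutations $ \sigma $ of non-intersecting families connecting $ v_{i_k} $ to $ w_{j_{\sigma(k)}} $. Suppose $ \sigma $ is not the identity. Then there are $ k < l $ with $ \sigma(k) > \sigma(l) $. By the second hypothesis of Theorem~\ref{thm_LGV}, the path from $ v_{i_l} $ to $ w_{j_{\sigma(l)}} $ must meet the path from $ v_{i_k} $ to $ w_{j_{\sigma(k)}} $. So such a family is never non-intersecting, and only $ \sigma = \mathrm{id} $ survives, with sign $ +1 $. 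This is exactly the conclusion stated in Theorem~\ref{thm_LGV}, so I would simply invoke it:
\begin{equation*}
  \Delta_{I', J'} = \sum_{(P_1, \ldots, P_s)} \prod_{k = 1}^{s} \prod_{e \in P_k} \omega(e),
\end{equation*}
where the sum runs over all collections of non-intersecting paths $ P_k : v_{i_k} \to w_{j_k} $. The sum is finite because $ \Gamma $ is finite and acyclic.

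Next comes the sign analysis. If $ \omega \geq 0 $, every summand is a product of nonnegative reals and hence nonnegative. Therefore $ \Delta_{I', J'} \geq 0 $ for all $ I', J' $; this includes the $ 1 \times 1 $ minors, namely the entries. Hence $ x(\Gamma, \omega) $ is totally nonnegative.

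If $ \omega > 0 $, every summand is strictly positive. The extra hypothesis says the index set of the sum is nonempty for every pair $ I', J' $ with $ |I'| = |J'| $. So each minor is a nonempty sum of positive numbers and is therefore positive, and $ x(\Gamma, \omega) $ is totally positive.

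There is no real obstacle. The one step needing care is matching the labelling order of $ I', J' $ to the row and column order of the minor, so that the permutation sign is trivial. That is handled by the planarity condition built into Theorem~\ref{thm_LGV}.
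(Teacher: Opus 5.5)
Your proposal is correct and follows the route the paper takes: the paper gives no separate argument and simply notes that the statement follows from Theorem~\ref{thm_LGV}, with each minor being a sign-free sum of products of edge weights. Such a sum is nonnegative when $\omega \geq 0$, and positive when $\omega > 0$ and the sum is nonempty by hypothesis. Your re-derivation that only the identity permutation survives is already contained in Theorem~\ref{thm_LGV} as stated, so it is redundant but harmless.
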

The importance of the special planar network $ (\Gamma_0, \omega) $ lies in the following theorem.
\begin{thm}[\cite{Fomin-Zelevinsky}, p.~25]\label{thm_bij}
  For any totally positive matrix $ M $, there exists a positive edge weight function $ \omega = (l_{i, j}, m_k, r_{i, j}) $ on $ \Gamma_0 $ such that
  \begin{equation*}
    x(\Gamma_0, \omega) = M.
  \end{equation*}
  More strongly, the map $ \omega \mapsto x(\Gamma_0, \omega) $ establishes a bijection between the set of all positive edge weight functions $ \omega = (l_{i, j}, m_k, r_{i, j})_{1 \leq i, j \leq n - 1, \ 2 \leq i + j \leq n, \ 1 \leq k \leq n} $ on $ \Gamma_0 $ and the set of all totally positive $ n \times n $ matrices.
\end{thm}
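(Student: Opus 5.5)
The plan is to compute the weight matrix of $\Gamma_0$ through its factorization and its minors, and to show that the edge weights are recovered from a special family of minors of $M$. Throughout, label the sources $1,\dots,n$ and the sinks $1',\dots,n'$ as in Figure~\ref{fig_Gamma_0}; this labelling satisfies the hypotheses of Theorem~\ref{thm_LGV}.

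\textbf{Factorization and total positivity.} Cut $\Gamma_0$ into three parts: the red (left) part, the green middle column, and the blue (right) part. Paths in the left part only go downward, paths in the right part only go upward, and the middle column is a diagonal scaling. Since weight matrices of concatenated networks multiply, this gives
\begin{equation*}
  x(\Gamma_0,\omega)=L\,D\,U,
\end{equation*}
where $L$ is lower unitriangular, $D=\operatorname{diag}(m_1,\dots,m_n)$, and $U$ is upper unitriangular. For positive $\omega$, I would check directly that every pair $I',J'$ with $|I'|=|J'|$ admits at least one family of non-intersecting paths. Take the paths that travel horizontally to the middle column, with the $k$-th smallest source moving down to the $k$-th smallest level, then moving up to its sink. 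The red and blue triangles are exactly large enough to allow this. By the theorem quoted from \cite{Fomin-Zelevinsky}, p.~24, $x(\Gamma_0,\omega)$ is then totally positive, so the map in Theorem~\ref{thm_bij} is well defined.

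\textbf{Initial minors and injectivity.} Call a minor $\Delta_{I,J}$ \emph{initial} if $I$ and $J$ are intervals and $1\in I\cup J$. There are exactly $n^2$ of them, matching the number of weights $n+2\binom{n}{2}$. The key combinatorial step is to show that for each initial minor, $\Gamma_0$ contains exactly one family of non-intersecting paths from $I$ to $J$. Such a family is forced to hug the bottom of the network, so each path uses the lowest available diagonals. By Theorem~\ref{thm_LGV}, each initial minor of $x(\Gamma_0,\omega)$ is therefore a single monomial in the weights.

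I would then order the initial minors so that each new minor introduces exactly one new weight with exponent $1$, compared with an earlier "neighbouring" initial minor. Concretely:
\begin{itemize}
  \item $\Delta_{[1,k],[1,k]}=m_1\cdots m_k$;
  \item shifting the row interval $[i,i+k]$ up by one brings in exactly one new $l$-edge;
  \item shifting the column interval brings in exactly one new $r$-edge.
\end{itemize}
This yields explicit formulas expressing each weight as a ratio of products of initial minors, for instance $m_k=\Delta_{[1,k],[1,k]}/\Delta_{[1,k-1],[1,k-1]}$. Hence $\omega$ is determined by $x(\Gamma_0,\omega)$, which proves injectivity.

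\textbf{Surjectivity.} Let $M$ be totally positive. All its initial minors are positive, so the ratio formulas define a positive weight function $\omega$. By construction, $M$ and $x(\Gamma_0,\omega)$ have the same initial minors. It remains to prove the lemma that a matrix all of whose initial minors are nonzero is determined by them. Each entry $a_{ij}$ is the lower-right corner of a unique initial minor. Expanding that minor expresses $a_{ij}$ linearly, with coefficient equal to the nonzero complementary initial minor, in terms of entries with smaller $\min(i,j)$ or earlier in a suitable order. Induction on this order then shows $M=x(\Gamma_0,\omega)$.

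\textbf{Main obstacle.} The step I expect to be hardest is the combinatorial uniqueness of the non-intersecting path family for each initial minor, together with the bookkeeping that makes the system for the weights triangular. I would handle it by induction on $n$. Deleting the top level $n$ of $\Gamma_0$, together with its outermost red and blue diagonals, gives the network $\Gamma_0$ of size $n-1$, and initial minors not involving row or column $n$ are unaffected by this deletion. This reduces the uniqueness claim to the families that reach the top level, which can be checked directly.
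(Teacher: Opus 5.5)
Your proposal is correct. It is essentially the Fomin--Zelevinsky argument that the paper invokes by citation; the paper gives no proof of its own. The steps match theirs: the factorization $x(\Gamma_0,\omega)=LDU$ plus an explicit non-crossing family gives total positivity, uniqueness of the path family for each initial minor makes those minors monomials that can be inverted triangularly for the weights, and the lemma that a matrix with nonzero initial minors is determined by them gives surjectivity.

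One small correction to your bookkeeping: a row shift contributes $k$ red weights, not one. For example, $\Delta_{[a+1,a+k],[1,k]}/\Delta_{[a,a+k-1],[1,k]}$ is a product of $k$ red edge weights. So the triangularity holds relative to all earlier minors (ordered by shift $a$, then size $k$), not relative to a single neighbouring minor. A single red weight is then recovered as a four-term ratio such as $\Delta_{[a+1,a+k],[1,k]}\,\Delta_{[a,a+k-2],[1,k-1]}\big/\bigl(\Delta_{[a,a+k-1],[1,k]}\,\Delta_{[a+1,a+k-1],[1,k-1]}\bigr)$.
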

Concerning totally nonnegative matrices, A. Whitney \cite{Whitney} obtained the following result.
\begin{thm}[\textbf{Whitney's theorem}, \cite{Whitney}, pp.~88--89]\label{thm_Whitney}
  Every invertible totally nonnegative matrix is obtained as the limit of a sequence of totally positive matrices.
\end{thm}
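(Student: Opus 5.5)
The plan is to approximate a given invertible totally nonnegative $n \times n$ matrix $A$ by matrices of the form $P_q A P_q$. Here $P_q$ is a fixed family of totally positive matrices that tends to the identity matrix as $q \to 0^+$. Then $P_q A P_q \to A$, so it suffices to show that every $P_q A P_q$ is totally positive.

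\emph{Step 1: the perturbing family.} Take $P_q \coloneqq \bigl(q^{(i-j)^2}\bigr)_{1 \leq i,j \leq n}$ for $0<q<1$. Clearly $P_q \to I$ as $q \to 0^+$. Write
\begin{equation*}
  q^{(i-j)^2} = q^{i^2}\, q^{j^2}\, (q^{-2i})^{j}.
\end{equation*}
Any minor $\Delta_{I,J}(P_q)$ is then a positive diagonal rescaling (by the factors $q^{i^2}$ on rows and $q^{j^2}$ on columns) of the generalized Vandermonde determinant $\det\bigl(x_{i_k}^{\,j_l}\bigr)_{k,l}$, where $x_i = q^{-2i}$. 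The nodes satisfy $1<x_{i_1}<\cdots<x_{i_s}$ and the exponents satisfy $j_1<\cdots<j_s$. Such a determinant is strictly positive. This is the classical fact that $\{x^{j_1},\dots,x^{j_s}\}$ is a Chebyshev system on $(0,\infty)$, proved by induction on $s$ via Descartes' rule of signs or Rolle's theorem. Hence every $P_q$ is totally positive. This step is the only genuinely analytic input, and I expect it to be the main obstacle: one must argue carefully for positivity rather than mere nonvanishing. For nonvanishing, a nonzero combination $\sum c_l x^{j_l}$ has at most $s-1$ positive roots. For the sign, I would deform the exponents continuously to $0,1,\dots,s-1$, where the determinant is an ordinary Vandermonde determinant and is positive; nonvanishing along the deformation then preserves the sign.

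\emph{Step 2: Cauchy--Binet.} For $I,J$ with $|I|=|J|=s$, apply the Cauchy--Binet formula twice:
\begin{equation*}
  \Delta_{I,J}(P_qAP_q) = \sum_{|K|=|L|=s} \Delta_{I,K}(P_q)\,\Delta_{K,L}(A)\,\Delta_{L,J}(P_q).
\end{equation*}
By Step 1 and the total nonnegativity of $A$, every summand is nonnegative. Since $A$ is invertible, any $s$ of its rows are linearly independent. So for each $s$ there exist $K$ and $L$ with $\Delta_{K,L}(A)\neq 0$, and hence $\Delta_{K,L}(A)>0$. The corresponding summand is strictly positive because the minors of $P_q$ are positive. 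Therefore $\Delta_{I,J}(P_qAP_q)>0$ for all $I,J$, so $P_qAP_q$ is totally positive.

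\emph{Step 3: conclusion.} Letting $q \to 0^+$ along a sequence gives totally positive matrices $P_qAP_q \to IAI = A$, which proves Theorem~\ref{thm_Whitney}. Invertibility is used only in Step 2, to guarantee a nonzero $s \times s$ minor of $A$ for every $s$.
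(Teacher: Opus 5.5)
Your proof is correct. The paper gives no argument of its own for this statement: it quotes Whitney and uses the result only heuristically, alongside Theorem~\ref{thm_bij}, to motivate looking for a nonnegative $\omega$ with $x(\Gamma_0,\omega)=\mathsf{M}$.

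The classical route goes back to Whitney's reduction theorem. One reduces the size of the matrix by row eliminations that preserve total nonnegativity. This is the mechanism behind the Loewner--Whitney factorization of an invertible totally nonnegative matrix into elementary bidiagonal factors with nonnegative parameters. Density then follows by making the zero parameters slightly positive in a factorization of the shape of $\Gamma_0$.

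Your multiplicative smoothing $A\mapsto P_qAP_q$ needs no factorization theory. Its only input is the total positivity of one explicit kernel, $(q^{(i-j)^2})$. After that, Cauchy--Binet and the existence of a nonzero (hence positive) $s\times s$ minor of $A$ for each $s$ finish the job, and the approximants are explicit. The price is that invertibility is used essentially. If $\operatorname{rank}A=r<n$, every minor of $P_qAP_q$ of order $>r$ vanishes, so the full density of totally positive matrices among all totally nonnegative ones would need a further idea. That does not matter here, since the statement concerns invertible matrices only.

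One small point in Step~1. Your deformation of the exponents passes through non-integer values. You therefore need the Rolle form of Descartes' rule for $\sum_l c_l x^{\lambda_l}$ with distinct real $\lambda_l$; the inductive Rolle argument works verbatim. Alternatively, since the $j_l$ are integers, you can skip the deformation. Set $\mu_l=j_{s+1-l}-(s-l)$; then $\det(x_k^{j_l})_{k,l}=s_\mu(x_1,\dots,x_s)\prod_{k<l}(x_l-x_k)$. The Schur polynomial $s_\mu$ is nonzero with nonnegative coefficients, which gives positivity at once.
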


\section{Proof of the Fujimoto conjecture}\label{sec_3}
\subsection{Observation}\label{subsec_observation}
Before proceeding to the proof, we see some examples.
\begin{itemize}
  \item $ m = 2 \ (t = 1)$. Then, $ f_1(z) = 1 $, $ f_2(z) = z - 1 $, and $ f_3(z) = z $. These polynomials can be represented by a $ 3t \times m = 3 \times 2 $ matrix by expanding $ f_i(z) $ with respect to monomials $ \{z^i\}_{0 \leq i \leq 2t - 1} = \{z^i\}_{0 \leq i \leq 1} $:
        \begin{equation*}
          \begin{pmatrix}
            1     \\
            z - 1 \\
            z     \\
          \end{pmatrix} =
          \begin{pmatrix}
            1  & 0 \\
            -1 & 1 \\
            0  & 1 \\
          \end{pmatrix}
          \begin{pmatrix}
            1 \\
            z \\
          \end{pmatrix}.
        \end{equation*}
  \item $ m = 4 \ (t = 2)$. Then, $ f_1(z) = 1 $, $ f_2(z) = z $, $ f_3(z) = (z - 1)^2 $, $ f_4(z) = (z - 1)^3 $, $ f_5(z) = z^2(z - 1) $, and $ f_6(z) = z^3 $. These polynomials can be represented as
        \begin{equation*}
          \begin{pmatrix}
            1          \\
            z          \\
            (z - 1)^2  \\
            (z - 1)^3  \\
            z^2(z - 1) \\
            z^3        \\
          \end{pmatrix} =
          \begin{pmatrix}
            1  & 0  & 0  & 0 \\
            0  & 1  & 0  & 0 \\
            1  & -2 & 1  & 0 \\
            -1 & 3  & -3 & 1 \\
            0  & 0  & -1 & 1 \\
            0  & 0  & 0  & 1 \\
          \end{pmatrix}
          \begin{pmatrix}
            1   \\
            z   \\
            z^2 \\
            z^3
          \end{pmatrix}.
        \end{equation*}
  \item $ m = 6 \ (t = 3)$. Then, $ f_1(z) = 1 $, $ f_2(z) = z $, $ f_3(z) = z^2 $, $ f_4(z) = (z - 1)^3 $, $ f_5(z) = (z - 1)^4 $, $ f_6(z) = (z - 1)^5 $, $ f_7(z) = z^3(z - 1)^2 $, $ f_8(z) = z^4(z - 1) $, and $ f_9(z) = z^5 $. These polynomials can be represented as
        \begin{equation*}
          \begin{pmatrix}
            1            \\
            z            \\
            z^2          \\
            (z - 1)^3    \\
            (z - 1)^4    \\
            (z - 1)^5    \\
            z^3(z - 1)^2 \\
            z^4(z - 1)   \\
            z^5          \\
          \end{pmatrix} =
          \begin{pmatrix}
            1  & 0  & 0   & 0  & 0  & 0 \\
            0  & 1  & 0   & 0  & 0  & 0 \\
            0  & 0  & 1   & 0  & 0  & 0 \\
            -1 & 3  & -3  & 1  & 0  & 0 \\
            1  & -4 & 6   & -4 & 1  & 0 \\
            -1 & 5  & -10 & 10 & -5 & 1 \\
            0  & 0  & 0   & 1  & -2 & 1 \\
            0  & 0  & 0   & 0  & -1 & 1 \\
            0  & 0  & 0   & 0  & 0  & 1 \\
          \end{pmatrix}
          \begin{pmatrix}
            1   \\
            z   \\
            z^2 \\
            z^3 \\
            z^4 \\
            z^5 \\
          \end{pmatrix}.
        \end{equation*}
\end{itemize}
Note that the $ m \times m $ matrices obtained by taking the rows from the $ (t + 1) $-th through the $ 3t $-th rows of the above coefficient matrices and replacing each entry by its absolute value are represented in the form $ x(\Gamma_0, \omega) $. Indeed, the corresponding planar networks are given as shown in Figure~\ref{Weighted planar network}.
\begin{figure}[ht]
  \setlength{\unitlength}{1.2pt}
  \begin{center}
    \begin{picture}(180,40)(0,-10)
      \thicklines

      \put(60,0){\line(1,0){20}}
      \put(60,20){\line(1,0){20}}

      \green{
        \put(80,0){\line(1,0){20}}
        \put(80,20){\line(1,0){20}}
      }

      \put(87.5,2.5){$ 1 $}
      \put(87.5,22.5){$ 1 $}

      \put(100,0){\line(1,0){20}}
      \put(100,20){\line(1,0){20}}

      \blue{
        \put(100,0){\line(1,1){20}}
      }

      \put(107.5,7.5){$ \frac{1}{1} $}

      \thinlines
      \red{
        \put(60,20){\line(1,-1){20}}
      }

      \put(67.5,7.5){$ 0 $}

      \thicklines

      \put(125,-2){$ 1' $}
      \put(125,18){$ 2' $}

      \put(51,-2){$ 1 $}
      \put(51,18){$ 2 $}

      \put(60,0){\circle*{2.5}}
      \put(60,20){\circle*{2.5}}

      \put(60,20){\circle*{2.5}}

      \put(80,0){\circle*{2.5}}
      \put(80,20){\circle*{2.5}}

      \put(100,0){\circle*{2.5}}
      \put(100,20){\circle*{2.5}}

      \put(120,20){\circle*{2.5}}

      \put(120,0){\circle*{2.5}}
      \put(120,20){\circle*{2.5}}
    \end{picture}
  \end{center}
  \begin{center}
    \begin{picture}(180,85)(0,-20)
      \thicklines

      \put(20,0){\line(1,0){60}}
      \put(20,20){\line(1,0){60}}
      \put(20,40){\line(1,0){60}}
      \put(20,60){\line(1,0){60}}

      \green{
        \put(80,0){\line(1,0){20}}
        \put(80,20){\line(1,0){20}}
        \put(80,40){\line(1,0){20}}
        \put(80,60){\line(1,0){20}}
      }

      \put(87.5,2.5){$ 1 $}
      \put(87.5,22.5){$ 1 $}
      \put(87.5,42.5){$ 1 $}
      \put(87.5,62.5){$ 1 $}

      \put(100,0){\line(1,0){60}}
      \put(100,20){\line(1,0){60}}
      \put(100,40){\line(1,0){60}}
      \put(100,60){\line(1,0){60}}

      \blue{
        \put(100,0){\line(1,1){60}}
        \put(100,20){\line(1,1){40}}
        \put(100,40){\line(1,1){20}}
      }

      \put(107.5,7.5){$ \frac{2}{1} $}
      \put(107.5,27.5){$ \frac{3}{2} $}
      \put(107.5,47.5){$ \frac{1}{3} $}

      \put(127.5,27.5){$ \frac{1}{2} $}
      \put(127.5,47.5){$ \frac{2}{3} $}

      \put(147.5,47.5){$ 0 $}

      \thinlines
      \red{
        \put(60,60){\line(1,-1){20}}
        \put(40,60){\line(1,-1){40}}
        \put(20,60){\line(1,-1){60}}
      }

      \put(67.5,7.5){$ 1 $}
      \put(67.5,27.5){$ 0 $}
      \put(67.5,47.5){$ 0 $}

      \put(47.5,27.5){$ 0 $}
      \put(47.5,47.5){$ 0 $}

      \put(27.5,47.5){$ 0 $}

      \thicklines

      \put(165,-2){$ 1' $}
      \put(165,18){$ 2' $}
      \put(165,38){$ 3' $}
      \put(165,58){$ 4' $}

      \put(11,-2){$ 1 $}
      \put(11,18){$ 2 $}
      \put(11,38){$ 3 $}
      \put(11,58){$ 4 $}

      \put(20,0){\circle*{2.5}}
      \put(20,20){\circle*{2.5}}
      \put(20,40){\circle*{2.5}}
      \put(20,60){\circle*{2.5}}

      \put(20,60){\circle*{2.5}}

      \put(40,40){\circle*{2.5}}
      \put(40,60){\circle*{2.5}}

      \put(60,20){\circle*{2.5}}
      \put(60,40){\circle*{2.5}}
      \put(60,60){\circle*{2.5}}

      \put(80,0){\circle*{2.5}}
      \put(80,20){\circle*{2.5}}
      \put(80,40){\circle*{2.5}}
      \put(80,60){\circle*{2.5}}

      \put(100,0){\circle*{2.5}}
      \put(100,20){\circle*{2.5}}
      \put(100,40){\circle*{2.5}}
      \put(100,60){\circle*{2.5}}

      \put(120,20){\circle*{2.5}}
      \put(120,40){\circle*{2.5}}
      \put(120,60){\circle*{2.5}}

      \put(140,40){\circle*{2.5}}
      \put(140,60){\circle*{2.5}}

      \put(160,60){\circle*{2.5}}

      \put(160,0){\circle*{2.5}}
      \put(160,20){\circle*{2.5}}
      \put(160,40){\circle*{2.5}}
      \put(160,60){\circle*{2.5}}
    \end{picture}
  \end{center}
  \begin{center}
    \begin{picture}(180,85)(0,10)
      \thicklines

      \put(-20,0){\line(1,0){100}}
      \put(-20,20){\line(1,0){100}}
      \put(-20,40){\line(1,0){100}}
      \put(-20,60){\line(1,0){100}}
      \put(-20,80){\line(1,0){100}}
      \put(-20,100){\line(1,0){100}}

      \green{
        \put(80,0){\line(1,0){20}}
        \put(80,20){\line(1,0){20}}
        \put(80,40){\line(1,0){20}}
        \put(80,60){\line(1,0){20}}
        \put(80,80){\line(1,0){20}}
        \put(80,100){\line(1,0){20}}
      }

      \put(87.5,2.5){$ 1 $}
      \put(87.5,22.5){$ 1 $}
      \put(87.5,42.5){$ 1 $}
      \put(87.5,62.5){$ 1 $}
      \put(87.5,82.5){$ 1 $}
      \put(87.5,102.5){$ 1 $}

      \put(100,0){\line(1,0){100}}
      \put(100,20){\line(1,0){100}}
      \put(100,40){\line(1,0){100}}
      \put(100,60){\line(1,0){100}}
      \put(100,80){\line(1,0){100}}
      \put(100,100){\line(1,0){100}}

      \blue{
        \put(100,0){\line(1,1){100}}
        \put(100,20){\line(1,1){80}}
        \put(100,40){\line(1,1){60}}
        \put(100,60){\line(1,1){40}}
        \put(100,80){\line(1,1){20}}
      }

      \put(107.5,7.5){$ \frac{3}{1} $}
      \put(107.5,27.5){$ \frac{4}{2} $}
      \put(107.5,47.5){$ \frac{5}{3} $}
      \put(107.5,67.5){$ \frac{2}{4} $}
      \put(107.5,87.5){$ \frac{1}{5} $}

      \put(127.5,27.5){$ \frac{2}{2} $}
      \put(127.5,47.5){$ \frac{3}{3} $}
      \put(127.5,67.5){$ \frac{4}{4} $}
      \put(127.5,87.5){$ \frac{1}{5} $}

      \put(147.5,47.5){$ \frac{1}{3} $}
      \put(147.5,67.5){$ \frac{2}{4} $}
      \put(147.5,87.5){$ \frac{3}{5} $}

      \put(167.5,67.5){$ 0 $}
      \put(167.5,87.5){$ 0 $}

      \put(187.5,87.5){$ 0 $}

      \thinlines
      \red{
        \put(60,100){\line(1,-1){20}}
        \put(40,100){\line(1,-1){40}}
        \put(20,100){\line(1,-1){60}}
        \put(0,100){\line(1,-1){80}}
        \put(-20,100){\line(1,-1){100}}
      }

      \put(67.5,7.5){$ 1 $}
      \put(67.5,27.5){$ 1 $}
      \put(67.5,47.5){$ 0 $}
      \put(67.5,67.5){$ 0 $}
      \put(67.5,87.5){$ 0 $}

      \put(47.5,27.5){$ 1 $}
      \put(47.5,47.5){$ 0 $}
      \put(47.5,67.5){$ 0 $}
      \put(47.5,87.5){$ 0 $}

      \put(27.5,47.5){$ 0 $}
      \put(27.5,67.5){$ 0 $}
      \put(27.5,87.5){$ 0 $}

      \put(7.5,67.5){$ 0 $}
      \put(7.5,87.5){$ 0 $}

      \put(-12.5,87.5){$ 0 $}

      \thicklines

      \put(205,-2){$ 1' $}
      \put(205,18){$ 2' $}
      \put(205,38){$ 3' $}
      \put(205,58){$ 4' $}
      \put(205,78){$ 5' $}
      \put(205,98){$ 6' $}

      \put(-29,-2){$ 1 $}
      \put(-29,18){$ 2 $}
      \put(-29,38){$ 3 $}
      \put(-29,58){$ 4 $}
      \put(-29,78){$ 5 $}
      \put(-29,98){$ 6 $}

      \put(-20,0){\circle*{2.5}}
      \put(-20,20){\circle*{2.5}}
      \put(-20,40){\circle*{2.5}}
      \put(-20,60){\circle*{2.5}}
      \put(-20,80){\circle*{2.5}}
      \put(-20,100){\circle*{2.5}}

      \put(-20,100){\circle*{2.5}}

      \put(0,80){\circle*{2.5}}
      \put(0,100){\circle*{2.5}}

      \put(20,60){\circle*{2.5}}
      \put(20,80){\circle*{2.5}}
      \put(20,100){\circle*{2.5}}

      \put(40,40){\circle*{2.5}}
      \put(40,60){\circle*{2.5}}
      \put(40,80){\circle*{2.5}}
      \put(40,100){\circle*{2.5}}

      \put(60,20){\circle*{2.5}}
      \put(60,40){\circle*{2.5}}
      \put(60,60){\circle*{2.5}}
      \put(60,80){\circle*{2.5}}
      \put(60,100){\circle*{2.5}}

      \put(80,0){\circle*{2.5}}
      \put(80,20){\circle*{2.5}}
      \put(80,40){\circle*{2.5}}
      \put(80,60){\circle*{2.5}}
      \put(80,80){\circle*{2.5}}
      \put(80,100){\circle*{2.5}}

      \put(100,0){\circle*{2.5}}
      \put(100,20){\circle*{2.5}}
      \put(100,40){\circle*{2.5}}
      \put(100,60){\circle*{2.5}}
      \put(100,80){\circle*{2.5}}
      \put(100,100){\circle*{2.5}}

      \put(120,20){\circle*{2.5}}
      \put(120,40){\circle*{2.5}}
      \put(120,60){\circle*{2.5}}
      \put(120,80){\circle*{2.5}}
      \put(120,100){\circle*{2.5}}

      \put(140,40){\circle*{2.5}}
      \put(140,60){\circle*{2.5}}
      \put(140,80){\circle*{2.5}}
      \put(140,100){\circle*{2.5}}

      \put(160,60){\circle*{2.5}}
      \put(160,80){\circle*{2.5}}
      \put(160,100){\circle*{2.5}}

      \put(180,80){\circle*{2.5}}
      \put(180,100){\circle*{2.5}}

      \put(200,100){\circle*{2.5}}

      \put(200,0){\circle*{2.5}}
      \put(200,20){\circle*{2.5}}
      \put(200,40){\circle*{2.5}}
      \put(200,60){\circle*{2.5}}
      \put(200,80){\circle*{2.5}}
      \put(200,100){\circle*{2.5}}
    \end{picture}
  \end{center}
  \caption{Weighted planar networks}
  \label{Weighted planar network}
\end{figure}

In general, for each $ m $, the coefficient matrix $ \mathsf{M}' $ associated with the polynomials $ f_i(z) \ (1 \leq i \leq 3t) $ is given by
\begin{equation*}
  \mathsf{M}' \coloneqq
  \begin{pmatrix}
    \mathrm{D}((-1)^{i})_{1 \leq i \leq t} & O                                          & O   \\
    O                                      & \mathrm{D}((-1)^{t + i})_{1 \leq i \leq t} & O   \\
    O                                      & O                                          & I_t
  \end{pmatrix}
  \begin{pmatrix}
    I_t & O   \\
    M_1 & M_2 \\
    O   & M_3 \\
  \end{pmatrix}
  \begin{pmatrix}
    \mathrm{D}((-1)^{i})_{1 \leq i \leq t} & O                                          \\
    O                                      & \mathrm{D}((-1)^{t + i})_{1 \leq i \leq t} \\
  \end{pmatrix},
\end{equation*}
where
\begin{align*}
  M_1 & \coloneqq \left(\binom{t + i - 1}{j - 1}\right)_{1 \leq i, j \leq t},     \\
  M_2 & \coloneqq \left(\binom{t + i - 1}{t + j - 1}\right)_{1 \leq i, j \leq t}, \\
  M_3 & \coloneqq \left(\binom{t - i}{j - i}\right)_{1 \leq i, j \leq t},
\end{align*}
and $ \mathrm{D}(d_i)_{1 \leq i \leq t} $ is a diagonal matrix whose $ (i, i) $-entry is $ d_i $.
\begin{lem}\label{lem_det}
  \begin{equation*}
    \det M_1 = \det M_2 = \det M_3 = 1.
  \end{equation*}
\end{lem}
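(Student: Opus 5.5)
The three determinants are handled separately, and two of them are immediate.

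For $M_3$: the entry $\binom{t-i}{j-i}$ vanishes for $j<i$, so $M_3$ is upper triangular. Its diagonal entries are $\binom{t-i}{0}=1$, hence $\det M_3=1$.

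For $M_2$: the entry $\binom{t+i-1}{t+j-1}$ vanishes when $j>i$, because then $t+j-1>t+i-1$. So $M_2$ is lower triangular with diagonal entries $\binom{t+i-1}{t+i-1}=1$, and $\det M_2=1$.

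The only real work is $M_1=\bigl(\binom{t+i-1}{j-1}\bigr)_{1\le i,j\le t}$. This is a matrix of binomial coefficients $\binom{a_i}{j-1}$ with $a_i=t+i-1$. Since $\binom{x}{j-1}$ is a polynomial in $x$ of degree $j-1$ with leading coefficient $1/(j-1)!$, column operations reduce the $j$-th column to $a_i^{j-1}/(j-1)!$. This gives
\begin{equation*}
  \det M_1=\prod_{j=1}^{t}\frac{1}{(j-1)!}\,\prod_{1\le i<i'\le t}(a_{i'}-a_i).
\end{equation*}
The $a_i$ are consecutive integers, so $a_{i'}-a_i=i'-i$. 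Hence $\prod_{i<i'}(i'-i)=\prod_{k=1}^{t}(k-1)!$, which cancels the prefactor exactly and yields $\det M_1=1$.

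An alternative argument for $M_1$, useful as a cross-check, goes by row differencing. Repeatedly replace row $i$ by row $i$ minus row $i-1$, working from the bottom up, and use Pascal's rule $\binom{n}{k}-\binom{n-1}{k}=\binom{n-1}{k-1}$. This transforms the matrix into $\bigl(\binom{t}{j-i}\bigr)$, which is upper unitriangular.

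The main obstacle is only in the Vandermonde step: checking that the falling-factorial normalization and the product of differences cancel exactly. Both equal the superfactorial $\prod_{k=0}^{t-1}k!$, so this step is routine.
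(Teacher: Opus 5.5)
Your proof is correct. For $M_2$ and $M_3$ you check that they are unitriangular, which the paper only implies by saying ``it suffices to show $\det M_1 = 1$''.

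For $M_1$, your main route differs from the paper's. The paper uses Vandermonde's convolution, which here is just Pascal's rule, to factor
$M_1 = \bigl(\binom{t+i-2}{j-1}\bigr)\cdot\bigl(\binom{1}{j-i}\bigr)$,
where the right factor is upper unitriangular and bidiagonal. It repeats this $t$ times, lowering the top index each time, until it reaches the lower unitriangular matrix $\bigl(\binom{i-1}{j-1}\bigr)$. This amounts to a sequence of column operations. Your row-differencing cross-check is the same Pascal-rule mechanism applied to rows instead of columns, ending at $\bigl(\binom{t}{j-i}\bigr)$.

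Your primary argument via the Vandermonde determinant gives more. It yields the closed form
$\det\bigl(\binom{a_i}{j-1}\bigr) = \prod_{i<i'}(a_{i'}-a_i)/\prod_{k=0}^{t-1}k!$
for arbitrary $a_i$, which shows that the value $1$ comes exactly from the rows having consecutive top indices. The cost is the superfactorial cancellation.

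The paper's factorization needs no cancellation and stays within integer unitriangular matrices. Its bidiagonal factors are of the kind that correspond to single layers of diagonal edges in planar networks such as $\Gamma_0$, which fits the paper's total-positivity viewpoint.
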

\begin{proof}
  It suffices to show $ \det M_1 = 1 $. Since $ \binom{t + i - 1}{j - 1} = \sum_{k = 0}^{t - 1}\binom{t + i - 2}{k}\binom{1}{j - 1 - k} $ by Vandermonde's convolution formula, we have
  \begin{equation*}
    M_1 = \left(\binom{t + i - 2}{j - 1}\right)_{1 \leq i, j \leq t} \cdot \left(\binom{1}{j - i}\right)_{1 \leq i, j \leq t}
  \end{equation*}
  and hence $ \det M_1 = \det \left(\binom{t + i - 2}{j - 1}\right)_{1 \leq i, j \leq t} = \cdots = \det \left(\binom{t + i - (t + 1)}{j - 1}\right)_{1 \leq i, j \leq t} = 1 $.
\end{proof}
We define an $ m \times m $ matrix $ \mathsf{M} $ by
\begin{equation*}
  \mathsf{M} \coloneqq
  \begin{pmatrix}
    M_1 & M_2 \\
    O   & M_3 \\
  \end{pmatrix}.
\end{equation*}
The above examples suggest that $ \mathsf{M} $ is totally nonnegative. Since $ \mathsf{M} $ is invertible by Lemma~\ref{lem_det}, it is reasonable to expect that $ \mathsf{M} $ is represented in the form $ x(\Gamma_0, \omega) $ for a nonnegative edge weight function $ \omega $ in view of Theorems~\ref{thm_bij} and~\ref{thm_Whitney}. In the next subsection, we give a concrete edge weight function suggested by the above examples.

\subsection{Construction of $ (\Gamma_0, \omega_0) $}\label{subsec_construction_graph}
To prove the main theorem, we assign a nonnegative edge weight function $ \omega_0 $ on the graph $ \Gamma_0 $ corresponding to the $ m \times m $ matrix $ \mathsf{M} $.
As suggested by the above observations, the following defines the desired edge weight function $ \omega_0 = (l_{i, j}, m_{k},  r_{i, j}) $:
\begin{equation*}
  r_{i, j} \coloneqq
  \begin{cases}
    \frac{i + (t - j)}{i + j - 1}     & (1 \leq i, j \leq t), \\
    \frac{m - (i + j) + 1}{i + j - 1} & (i > t),              \\
    0                                 & (j > t),
  \end{cases}
\end{equation*}
\begin{equation*}
  m_k \coloneqq 1 \quad (1 \leq k \leq m),
\end{equation*}
and
\begin{equation*}
  l_{i, j} \coloneqq
  \begin{cases}
    1 & (i + j \leq t), \\
    0 & (i + j > t),
  \end{cases}
\end{equation*}
where $ i $ and $ j $ range over all integers $ 1 \leq i, j \leq m - 1 = 2t - 1 $ with $ 2 \leq i + j \leq m $. We denote the left (resp. right) vertex set $ \{1, 2, \ldots, m\} $ (resp. $ \{1', 2', \ldots, m'\} $) of $ (\Gamma_0, \omega_0) $ by $ I_0 $ (resp. $ J_0 $).
\subsection{Proof of $ x(\Gamma_0, \omega_0) = \mathsf{M} $}

In this subsection, we use the following notation.
\begin{itemize}
  \item Pochhammer symbol.
        \begin{equation*}
          (z)_s \coloneqq \frac{\Gamma(z + s)}{\Gamma(z)} \ (s \in \mathbb{Z}_{\geq 0}),
        \end{equation*}
        where $ \Gamma(z) $ is the Gamma function.
  \item Generalized hypergeometric function.
        \begin{equation*}
          {}_pF_q\left(\begin{matrix}
              a_1, \, a_2, \, \ldots, \, a_p \\
              b_1, b_2, \ldots, b_q          \\
            \end{matrix}
          \bigg| \, z\right) \coloneqq \sum_{s = 0}^{\infty}\frac{(a_1)_s (a_2)_s \cdots (a_p)_s}{(b_1)_s (b_2)_s \cdots (b_q)_s}\frac{z^s}{s!}.
        \end{equation*}
  \item The subplanar network $ R_{a, b}(r_{i, j}) \ (1 \leq i, j \leq m - 1 )$ of $ (\Gamma_0, \omega_0) $ is a weighted $ a \times b $ square lattice cut out from $ (\Gamma_0, \omega_0) $ such that it consists only of blue edges whose weights are of the form $ r_{i', j'} = \frac{i' + (t - j')}{i' + j' - 1} $ and black edges, and its top-right blue edge has weight $ r_{i, j} $.
  \item The planar network $ R_{a, b}'(i + (t - j)) $ is a weighted $ a \times b $ square lattice defined by replacing the weights $ \frac{i' + (t - j')}{i' + j' - 1} $ of all blue edges in $ R_{a, b}(r_{i, j}) $ by $ i' + (t - j') $.
  \item $ w_{a, b}(k) $ is the sum of the weights of all lattice paths from the bottom-left corner of $ R_{a, b}'(k) $ to its top-right corner.
  \item Let $ i \in I_0 $ and $ j \in J_0 $. $ w(i \nearrow j) $ (resp. $ w(i \searrow j) $) is the sum of the weights of all paths in $ (\Gamma_0, \omega_0) $ from the vertex $ i $ to $ j $ with no downward (resp. upward) steps. $ w(i \to j) $ is the sum of the weights of all paths from $ i $ to $ j $.
\end{itemize}
\begin{lem}\label{lem_weight}
  \begin{equation}\label{eq_weight}
    w_{a, b}(k) = \binom{a + b}{a}(k + b)_a.
  \end{equation}
\end{lem}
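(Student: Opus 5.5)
I would prove \eqref{eq_weight} by induction on $a + b$, using the standard ``last step'' recursion for lattice path sums together with a short binomial/Pochhammer identity. The key point is to understand how the blue weights $i' + (t - j')$ change as one moves inside the lattice $R'_{a, b}(k)$. Along a single blue diagonal of $\Gamma_0$, consecutive blue edges have the same $j'$ and consecutive $i'$. Passing horizontally to the neighbouring diagonal shifts $j'$ by one and, since the position along the diagonal is measured from its starting level, also shifts $i'$ by one in the opposite direction. Hence, read from the top-right blue edge (weight $k$) towards the bottom-left, the weight $i' + (t - j')$ grows by $1$ for each step back along a diagonal and by $2$ for each step back horizontally. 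Since the orientation of these index shifts in Figure~\ref{fig_Gamma_0} is easy to get backwards, I would pin it down against the small cases. For $a = 1$ the weights along the bottom row are then $k, k + 2, \ldots, k + 2b$, and their sum is $(b + 1)(k + b)$, which agrees with \eqref{eq_weight}. For $b = 0$ the weights along the single diagonal are $k, k + 1, \ldots, k + a - 1$, and their product is $(k)_a$, which also agrees.

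With this bookkeeping fixed, I would split the paths from the bottom-left to the top-right corner of $R'_{a, b}(k)$ according to their last edge. If the last edge is the top-right blue edge, it contributes the factor $k$. The rest of the path is then a path in an $(a - 1) \times b$ lattice of the same type whose top-right blue weight is $k + 1$. If the last edge is horizontal (black, weight $1$), the rest is a path in an $a \times (b - 1)$ lattice whose top-right blue weight is $k + 2$. This gives
\begin{equation*}
  w_{a, b}(k) = k \, w_{a - 1, b}(k + 1) + w_{a, b - 1}(k + 2),
\end{equation*}
with base cases $w_{0, b}(k) = 1$ and $w_{a, 0}(k) = (k)_a$. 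Both base cases agree with \eqref{eq_weight}.

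For the inductive step I would substitute the claimed formula into the right-hand side, writing $C \coloneqq \binom{a + b}{a}$. I would use $\binom{a + b - 1}{b} = \frac{a}{a + b} C$ and $\binom{a + b - 1}{a} = \frac{b}{a + b} C$, and factor out $(k + b + 1)_{a - 1}$ using $(k + b + 1)_a = (k + b + 1)_{a - 1}(k + b + a)$. The right-hand side then becomes
\begin{equation*}
  \frac{C}{a + b}\,(k + b + 1)_{a - 1}\bigl(ka + b(k + b + a)\bigr) = C\,(k + b)(k + b + 1)_{a - 1} = C\,(k + b)_a,
\end{equation*}
which is exactly $\binom{a + b}{a}(k + b)_a$. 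This completes the induction.

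The main obstacle is the first step: correctly reading off from the definition $r_{i, j} = \frac{i + (t - j)}{i + j - 1}$ and the indexing in Figure~\ref{fig_Gamma_0} how the numerator weight shifts. We need $+1$ per step back along a diagonal and $+2$ per horizontal step back, and with the opposite sign conventions the recursion fails already for $a = 2$, $b = 1$. I would therefore check these shifts directly on the labelled blue edges of Figure~\ref{Weighted planar network} (the case $m = 6$) before running the induction. The algebraic identity itself is routine.
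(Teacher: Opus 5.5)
Your proof is correct, but it takes a different route from the paper. The paper inducts on $b$ with $k$ fixed and classifies paths by the position of their \emph{first} horizontal step. This gives the multi-term recursion
\[
w_{a,b}(k)=\sum_{s=0}^{a}(k+a+2b-s)_s\,w_{a-s,b-1}(k).
\]
Closing that induction needs a genuine summation identity, which the paper rewrites as a terminating ${}_3F_2$ at $1$ and evaluates by Saalsch\"utz's formula.

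You split by the \emph{last} edge instead and get the two-term recursion $w_{a,b}(k)=k\,w_{a-1,b}(k+1)+w_{a,b-1}(k+2)$. The two sub-parallelograms do have top-right numerators $k+1$ and $k+2$, and all their other weights follow the same shift rule. The inductive step then reduces to the identity $ka+b(k+a+b)=(a+b)(k+b)$. This is strictly more elementary than the paper's argument: it uses no hypergeometric machinery at all.

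The only cost is that $k$ shifts, so your induction hypothesis must be taken for all $k$ at once. That is harmless. The weights of $R'_{a,b}(k)$ depend only on $k$ and on relative position, so the statement is a polynomial identity in $k$. In any case, the sub-parallelograms are themselves sublattices of $\Gamma_0$ of the required type.

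One slip in your set-up: along a blue diagonal of $\Gamma_0$ it is $i'$ that stays constant and $j'$ that increases by one, not the reverse. The diagonal starting from level $i$ carries $r_{i,1},r_{i,2},\ldots$. With that corrected, the numerator $i'+(t-j')$ drops by $1$ per diagonal step and by $2$ per horizontal step in the direction of travel. That is exactly the rule you use, and it matches the paper's factor $(k+a+2b-s)_s$ and the blue labels in the $m=6$ example.
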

\begin{proof}
  We prove \eqref{eq_weight} by induction on $ b $.
  If $ b = 0 $, we have $ w_{a, 0}(k) = (k)_{a} = \binom{a + 0}{a}(k + 0)_a $, so the base case holds. Assume \eqref{eq_weight} holds for $ b - 1 $: $ w_{a, b - 1}(k) = \binom{a + (b - 1)}{a}(k + (b - 1))_a $. Then, by classifying the lattice paths from the bottom-left corner to the top-right corner by the position of their first horizontal step, we obtain
  \begin{align*}
    w_{a, b}(k) & = \sum_{s = 0}^a (k + a + 2b - s)_s w_{a - s, b - 1}(k)                                                                                                                                                      \\
                & = \sum_{s = 0}^a \binom{a + b - s - 1}{a - s}(k + a + 2b - s)_s(k + b - 1)_{a - s}                                                                                                                           \\
                & = \sum_{s = 0}^a \frac{\binom{a}{s}\binom{a + b - 1}{a}}{\binom{a + b - 1}{s}} \frac{\binom{k + a + 2b - 1}{s}\binom{k + a + b - 2}{a}}{\binom{k + a + b - 2}{s}}a!                                          \\
                & = \sum_{s = 0}^a \frac{\binom{a}{s}\binom{a + b - 1}{a}}{\binom{a + b - 1}{s}} \frac{\binom{k + a + 2b - 1}{s}\binom{k + a + b - 2}{a}}{\binom{k + a + b - 2}{s}}\frac{(k + b)_a}{\binom{k + a + b - 1}{a}}.
  \end{align*}
  Hence, it is enough to show that
  \begin{equation*}
    \frac{\binom{k + a + b - 1}{a}\binom{a + b}{a}}{\binom{k + a + b - 2}{a}\binom{a + b - 1}{a}} = \sum_{s = 0}^a\frac{\binom{a}{s}\binom{k + a + 2b - 1}{s}}{\binom{a + b - 1}{s}\binom{k + a + b - 2}{s}}.
  \end{equation*}
  The left-hand side can be written as $ \frac{(k + a + b - 1)(a + b)}{(k + b - 1)b} $. On the other hand, since $ \binom{A}{s} = (-1)^s\frac{(-A)_s}{s!} $, the right-hand side can be expressed as
  \begin{equation*}
    \sum_{s = 0}^a \frac{(1)_s(-k - a - 2b + 1)_s(-a)_s}{(- a - b + 1)_s(-k - a - b + 2)_s}\frac{1}{s!} =
    {}_3F_2\left(
    \begin{matrix}
        1, \, - k - a - 2b + 1, \, -a   \\
        - a - b + 1, \, - k - a - b + 2 \\
      \end{matrix}
    \biggm|1\right).
  \end{equation*}
  By Saalsch\"utz's formula (W. N. Bailey \cite{Bailey}, Chapter II, p.~9), this quantity is equal to
  \begin{equation*}
    \frac{(-a - b)_a(k + b)_a}{(-a - b + 1)_a(k + b - 1)_a} = \frac{(k + a + b - 1)(a + b)}{(k + b - 1)b}.
  \end{equation*}
\end{proof}
\begin{lem}\label{lem_up}
  For $ 1 \leq i \leq t $ and $ 1 \leq j \leq m $, we have
  \begin{equation*}
    w(i \nearrow j) = \binom{t}{j - i}.
  \end{equation*}
\end{lem}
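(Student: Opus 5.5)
The plan is to describe the upward paths from $i$ to $j$ explicitly, show that only blue weights of the first type $r_{i',j'}=\frac{i'+(t-j')}{i'+j'-1}$ with $j'\le t$ occur, and then reduce $w(i\nearrow j)$ to a lattice-path sum evaluated by Lemma~\ref{lem_weight}.

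\textbf{Step 1: shape of an upward path.} A path from $i$ with no downward step cannot use a red edge, since red edges go down. So it runs horizontally through the left half with weight $1$ and crosses the green edge $m_i=1$. In the right half it uses exactly $a:=j-i$ blue edges, in columns $c_1<c_2<\cdots<c_a$. The $s$-th blue step goes from level $i+s-1$ to level $i+s$, and in the paper's indexing it is the edge $r_{i',j'}$ with $j'=c_s$ and $i'=i+s-c_s$.

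\textbf{Step 2: only first-type weights occur.} Passing from step $s$ to step $s+1$ changes $i'$ by $1-(c_{s+1}-c_s)\le 0$, so $i'$ is non-increasing along the path. It starts at $i-c_1+1\le i\le t$, so every blue edge used has $i'\le t$, and the case $i'>t$ of $r$ never occurs. If some $c_s>t$, the weight is $r_{i',j'}=0$, so only paths with all $c_s\le t$ contribute. In particular, for $a>t$ there is no contributing path and both sides are $0$; likewise for $j<i$ both sides vanish.

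\textbf{Step 3: clearing denominators.} The denominators $i'+j'-1$ are the starting levels $i,i+1,\ldots,j-1$, independent of the path. Hence
\begin{equation*}
  w(i\nearrow j)=\frac{1}{(i)_a}\sum_{c_1<\cdots<c_a}\ \prod_{s=1}^a\bigl(i+s-2c_s+t\bigr).
\end{equation*}
The sum on the right is a lattice-path sum in an $a\times b$ rectangle with vertical weights of the form $i'+(t-j')$, i.e.\ a quantity $w_{a,b}(k)$ in the notation of Lemma~\ref{lem_weight}. The horizontal steps run over the columns $1,\ldots,t$ not used by the vertical steps, which suggests $b=t-a$. The constant $k$ is the numerator at the bottom-left vertical step of the lattice, after relabelling so that the weights increase in the way required by Lemma~\ref{lem_weight}.

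\textbf{Step 4: applying Lemma~\ref{lem_weight}.} The lemma gives $w_{a,b}(k)=\binom{a+b}{a}(k+b)_a=\binom{t}{a}(k+t-a)_a$. The claim $w(i\nearrow j)=\binom{t}{j-i}$ then reduces to $(k+t-a)_a=(i)_a$, which requires $k=i-t+a$. I would verify this against the explicit numerators $i+s-2c_s+t$, checking it on the $t=2,3$ networks in Figure~\ref{Weighted planar network}.

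\textbf{Main obstacle.} The hard part is the bookkeeping that matches the admissible column sequences exactly with the monotone paths in the rectangle $R'_{a,b}(k)$. Two issues must be settled. First, the constraint $c_s\le i+s-1$ (the edge must exist, i.e.\ $i'\ge1$) may or may not be automatic inside the chosen rectangle. Second, the orientation and labelling must match the convention of Lemma~\ref{lem_weight}.

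If the region is not a full rectangle, I would first try to show that the missing paths carry a zero numerator. If that fails, I would split the sum according to the first column used, as in the induction in the proof of Lemma~\ref{lem_weight}, and induct on $a$ using Pascal's rule $\binom{t}{a}=\binom{t-1}{a}+\binom{t-1}{a-1}$.
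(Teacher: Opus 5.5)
\textbf{The gap is the case $i<j\le t$.} Your Steps 1--3 are correct as far as they go. For $j\ge t+1$ your plan is exactly the paper's second case. There the zero cutoff $c_s\le t$ is the binding constraint. Edge existence $c_s\le i+s-1$ is automatic, since it is equivalent to $c_a\le j-1$ and $c_a\le t\le j-1$. So the region is the full $(j-i)\times(t-j+i)$ rectangle with top-right edge $r_{j-t,t}$, and $k=j-t=i-t+a$. A small point: in Lemma~\ref{lem_weight}, $k$ is the numerator of the \emph{top-right} vertical edge, with weights increasing by $1$ per step down and $2$ per step left. That is already the orientation in $\Gamma_0$, so no relabelling is needed.

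Your single choice $b=t-a$ does not handle $i<j\le t$. There the binding constraint is edge existence, $c_a\le j-1<t$. So the actual paths are the lattice paths of the $(j-i)\times(i-1)$ rectangle on columns $1,\dots,j-1$, which is strictly smaller than your rectangle.

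Your first fallback is false: the extra lattice paths do not carry zero numerators. For $t=3$, $i=1$, $j=2$, the expression $i+s-2c_s+t$ gives $3,1,-1$ for $c_1=1,2,3$. Only $c_1=1$ is an edge of $\Gamma_0$, and the two phantom terms cancel only in total; you give no argument for such cancellation in general.

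The Pascal-rule fallback is not set up, and it is not a reduction to the $t-1$ network. After a one-column shift the numerators behave as if $t$ were replaced by $t-2$, while the edge-existence and zero cutoffs shift differently.

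The missing step is to use the rectangle adapted to this case, $R'_{j-i,\,i-1}(t-j+2)$, whose top-right edge is $r_{1,j-1}$, as the paper does. Lemma~\ref{lem_weight} then gives
\[
w(i\nearrow j)=\frac{\binom{j-1}{j-i}\,(t-j+i+1)_{j-i}}{(i)_{j-i}}=\binom{t}{j-i}.
\]
So the proof needs the paper's two-case split: this rectangle for $j\le t+1$, and yours for $j\ge t+1$. One rectangle cannot cover both.
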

\begin{proof}
  If $ j \leq i $, the assertion is trivial.
  Thus, we may assume $ j > i $. First, consider the case $ j \leq t + 1 $. Computing $ w(i \nearrow j) $, we use the square lattice $  R_{j - i, i - 1}(r_{1, j - 1}) = R_{j - i, i - 1}(\frac{t - j + 2}{j - 1}) $. Applying Lemma~\ref{lem_weight} to $ R_{j - i, i - 1}'(t - j + 2) $, we have
  \begin{equation*}
    w(i \nearrow j) = \frac{w_{j - i, i - 1}(t - j + 2)}{(i)_{j - i}} = \binom{j - 1}{j - i}\frac{((t - j + 2) + (i - 1))_{j - i}}{(i)_{j - i}} = \binom{t}{j - i}.
  \end{equation*}
  Next, consider the case $ j > t $. Then the square lattice under consideration is $ R_{j - i, t - (j - i)}(r_{j - t, t}) = R_{j - i, t - (j - i)}(\frac{j - t}{j - 1}) $. Applying Lemma~\ref{lem_weight} to $ R_{j - i, t - (j - i)}'(j - t) $, we have
  \begin{equation*}
    w(i \nearrow j) = \frac{w_{j - i, t - (j - i)}(j - t)}{(i)_{j - i}} = \binom{t}{j - i}\frac{((j - t) + (t - (j - i)))_{j - i}}{(i)_{j - i}} = \binom{t}{j - i}.
  \end{equation*}
\end{proof}
\begin{lem}\label{lem_weight_1}
  For $ 1 \leq i \leq t $ and $ 1 \leq j \leq m $, we have
  \begin{equation*}
    w(i \to j) = \binom{t + i - 1}{j - 1}.
  \end{equation*}
\end{lem}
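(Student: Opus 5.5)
The plan is to split every path at the column of green edges. In $\Gamma_0$ all red edges lie to the left of the green edges and all blue edges lie to the right. So every path from $i \in I_0$ to $j \in J_0$ consists of a part with no upward steps ending at some level $k$, then the green edge $m_k = 1$, then a part with no downward steps from level $k$ to $j'$. Since $\omega_0$ is multiplicative along paths, I expect
\begin{equation*}
  w(i \to j) = \sum_{k=1}^{i} w(i \searrow k)\, w(k \nearrow j),
\end{equation*}
where $w(i \searrow k)$ here denotes the weight of descending paths in the left (red) part only, ending at level $k$ just before the green edge. Only $k \leq i \leq t$ occur, because the left part has no upward steps. Lemma~\ref{lem_up} then applies to every such $k$ and gives $w(k \nearrow j) = \binom{t}{j-k}$.

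The core step is to show $w(i \searrow k) = \binom{i-1}{k-1}$ for $1 \leq k \leq i \leq t$. Since the path starts at level $i \leq t$ and only descends, it uses only red edges $l_{i',j'}$ with $i' < i \leq t$. I will identify which of these have weight $1$, namely those with $i'+j' \leq t$. The aim is to show that this subnetwork, restricted to levels $1,\ldots,t$, is exactly the left triangle of $\Gamma_0$ for size $t$ with all weights $1$. Edges $l_{i',j'}$ with $i'+j' > t$ would be the extra diagonals coming from the larger size $m$, and they have weight $0$, so they contribute nothing. I then prove the binomial count by induction on $t$, or on $i$. I classify descending paths by whether they use the edge on the diagonal nearest the green column (the $j'=1$ diagonal) at their last step. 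This gives the Pascal recurrence $w(i\searrow k) = w'(i-1\searrow k-1) + w'(i-1 \searrow k)$, where $w'$ refers to the analogous network with one fewer diagonal. The alternative is to invoke the standard fact, via the factorization of Theorem~\ref{thm_bij}, that the all-ones lower triangle represents the Pascal matrix $\bigl(\binom{i-1}{k-1}\bigr)$.

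With both ingredients the statement reduces to Vandermonde's convolution:
\begin{equation*}
  \sum_{k=1}^{i} \binom{i-1}{i-k}\binom{t}{j-k} = \binom{t+i-1}{j-1}.
\end{equation*}
This follows by comparing coefficients of $x^{j-1}$ in $(1+x)^{i-1}(1+x)^t$. The main obstacle I expect is the index bookkeeping in the red part: checking precisely which red edges, labelled $l_{i',j'}$ in Figure~\ref{fig_Gamma_0}, are reachable from level $i \leq t$, and verifying that the weight-$1$ ones form exactly the size-$t$ Pascal triangle. This means checking that the cutoff $i'+j' \leq t$ matches the shape of the smaller $\Gamma_0$ and not something shifted by one. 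I would first check this against the $t=3$ picture in Figure~\ref{Weighted planar network}, where the nonzero red weights sit at $l_{1,1}, l_{2,1}, l_{1,2}$.
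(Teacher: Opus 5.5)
Your proposal is correct and is essentially the paper's proof: split every path at the green column, use $w(i\searrow k)=\binom{i-1}{k-1}$ together with Lemma~\ref{lem_up}, and finish with Vandermonde's convolution. The only difference is that the paper just asserts $w(i\searrow k)=\binom{i-1}{k-1}$ ``from the definition of the weights $l_{i',j'}$,'' whereas you justify it. The cleanest justification is that $l_{i',j'}$ runs from level $i'+j'$ to level $i'+j'-1$, so every red edge reachable from level $i\le t$ has $i'+j'\le i\le t$ and hence weight $1$. Also, the edges $l_{i',1}$ you peel off in your Pascal recurrence form the column adjacent to the green edges, not a diagonal.
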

\begin{proof}
  Note that $ w(i \searrow k) = \binom{i - 1}{k - 1}$. It follows from the definition of the weights $ l_{i', j'} $. Thus, from Lemma~\ref{lem_up} and Vandermonde's convolution formula, we have
  \begin{equation*}
    w(i \to j) = \sum_{k = 1}^i w(i \searrow k)w(k \nearrow j) = \sum_{k = 1}^i \binom{i - 1}{k - 1}\binom{t}{j - k} = \binom{t + i - 1}{j - 1}.
  \end{equation*}
\end{proof}
\begin{lem}\label{lem_weight_2}
  For $ t + 1 \leq i \leq m $ and $ 1 \leq j \leq m $, we have
  \begin{equation*}
    w(i \to j) = \binom{2t - i}{j - i}.
  \end{equation*}
\end{lem}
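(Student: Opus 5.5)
The plan is to show first that every path from a source $ i \geq t + 1 $ that carries nonzero weight only goes upward, so that $ w(i \to j) = w(i \nearrow j) $. Then I compute $ w(i \nearrow j) $ by a lattice-path argument parallel to Lemma~\ref{lem_up}. For the first step I read off Figure~\ref{fig_Gamma_0} that the red edge labelled $ l_{a, b} $ joins level $ a + b $ to level $ a + b - 1 $. A path starting at level $ i > t $ that ever moves down must therefore use a red edge with $ a + b \geq i > t $, and such an edge has weight $ l_{a, b} = 0 $. Hence $ w(i \searrow k) = \delta_{ik} $. Since all $ m_k = 1 $, we get $ w(i \to j) = w(i \nearrow j) $, which vanishes for $ j < i $ and equals $ 1 $ for $ j = i $, in agreement with $ \binom{2t - i}{j - i} $.

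For $ j > i $, the blue edge $ r_{a, b} $ joins level $ a + b - 1 $ to level $ a + b $, and it has weight $ 0 $ if $ b > t $. An upward path from $ i $ to $ j $ therefore chooses diagonals $ b_1 < b_2 < \cdots < b_{j - i} \leq t $, where the $ s $-th rise goes from level $ i + s - 1 $ to level $ i + s $ along diagonal $ b_s $, so that $ a_s = i + s - b_s $. All denominators are levels: $ a_s + b_s - 1 = i + s - 1 $. The product of denominators is therefore $ (i)_{j - i} $ for every path, exactly as in Lemma~\ref{lem_up}, and it suffices to show
\begin{equation*}
  \sum_{b_1 < \cdots < b_{j - i} \leq t} \ \prod_{s = 1}^{j - i} N(i + s - 1, b_s) = \binom{2t - i}{j - i}(i)_{j - i},
\end{equation*}
where $ N(r, b) = r + 1 + t - 2b $ in the region $ a = r + 1 - b \leq t $, and $ N(r, b) = 2t - r $ in the region $ a > t $. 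Note that for $ i \geq t + 1 $ the admissible cells lie in the part of $ \Gamma_0 $ where the weight rule switches from the first to the second case of the definition of $ r_{a, b} $ along the line $ a = t $.

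I would organize the computation by splitting each path at the boundary $ a = t $. Along a rise in the region $ a > t $, the numerator $ 2t - r $ depends only on the level $ r $, and not on the diagonal. The portion of the path in that region therefore contributes a product of consecutive integers times the number of ways to place those rises among the available diagonals, which is a simple binomial count. The remaining portion lives in a square lattice of the form $ R_{a', b'}(r_{t, \cdot}) $ with blue weights $ \frac{a + (t - b)}{a + b - 1} $, so Lemma~\ref{lem_weight} evaluates it directly as $ \binom{a' + b'}{a'}(k + b')_{a'} $ for the appropriate shift $ k $. I would sum over the level $ \ell $ at which the path crosses into the region $ a \leq t $. This gives a single sum of products of binomials and Pochhammer symbols, which I expect to reduce to $ \binom{2t - i}{j - i}(i)_{j - i} $ either by Vandermonde's convolution or, as in the proof of Lemma~\ref{lem_weight}, by writing it as a balanced $ {}_3F_2(1) $ and applying Saalsch\"utz's formula.

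The main obstacle is this final summation, together with getting the geometry of the two regions exactly right. This means pinning down, for each level $ r \geq t + 1 $, which diagonals $ b \leq t $ fall in which region and in what order a path meets them. If the crossing-level decomposition gives an unwieldy sum, I would instead induct on $ j $. Classifying paths by the last rise $ (j - 1 \to j) $ and its diagonal $ b $ gives
\begin{equation*}
  w(i \nearrow j) = \sum_{b} w^{(< b)}(i \nearrow j - 1)\, r_{j - b, b},
\end{equation*}
where $ w^{(< b)} $ counts upward paths using only diagonals $ < b $. I would then prove a refined closed form for these restricted sums, analogous to Lemma~\ref{lem_weight}, by induction, checking the cases $ t = 2, 3 $ against Figure~\ref{Weighted planar network} to guess it.
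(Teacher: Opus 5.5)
Your plan is essentially the paper's proof: after reducing to $w(i \nearrow j)$ via $l_{a,b}=0$ for $a+b>t$, the paper classifies paths by the level $i+s$ at which they first reach the diagonal $a=t$ (the bottom-left corner of $R_{j-i-s,2t-j}(r_{j-t,t})$), weights the first part by $\binom{i-t-1+s}{i-t-1}(2t-i-s+1)_s$, evaluates the rectangle with Lemma~\ref{lem_weight}, and finishes with Chu--Vandermonde, as you anticipated. The geometric point you left open is settled by noting that the first index $a$ never increases along a path, while the last rise forces $a \ge j-t$; so the second piece is the full rectangle with top-right edge $r_{j-t,t}$ (not $r_{t,\cdot}$).
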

\begin{proof}
  Since $ l_{i', j'} = 0 $ for $ i' + j' > t $, we have $ w(i \to j) = w(i \nearrow j) $ and we may assume $ j > i $. Each lattice path from $ i $ to $ j $ is classified by the smallest nonnegative integer $ 0 \leq s \leq j - i $ such that the path passes through the bottom-left corner of the planar network $ R_{j - i - s, 2t - j}(r_{j - t, t}) = R_{j - i - s, 2t - j}(\frac{j - t}{j - 1}) $. Hence, we obtain
  \begin{align*}
    w(i \nearrow j) & = \sum_{s = 0}^{j - i}\left\{\binom{i - t - 1 + s}{i - t - 1}(2t - i - s + 1)_s \times \frac{w_{j - i - s, 2t - j}(j - t)}{(i)_{j - i}}\right\}            \\
                    & = \sum_{s = 0}^{j - i}\binom{i - t - 1 + s}{i - t - 1}(2t - i - s + 1)_s \frac{\binom{2t - i - s}{j - i - s}((j - t) + (2t - j))_{j - i - s}}{(i)_{j - i}} \\
                    & = \sum_{s = 0}^{j - i}\binom{i - t - 1 + s}{i - t - 1}\binom{t - i + j - 1 - s}{t - 1}\frac{\binom{2t - 1}{j - 1}}{\binom{2t - 1}{i - 1}}.
  \end{align*}
  Thus, it suffices to show that
  \begin{equation*}
    \binom{j - 1}{i - 1} = \frac{\binom{2t - i}{j - i}\binom{2t - 1}{i - 1}}{\binom{2t - 1}{j - 1}} = \sum_{s = 0}^{j - i}\binom{i - t - 1 + s}{i - t - 1}\binom{t - i + j - 1 - s}{t - 1}.
  \end{equation*}
  Using the relation $ \binom{A + B}{B} = (-1)^B\binom{- A - 1}{B} $, the right-hand side of the above equation is represented as
  \begin{equation*}
    \sum_{s = 0}^{j - i}(-1)^{j - i}\binom{-i + t}{s}\binom{-t}{j - i - s},
  \end{equation*}
  and is equal to $ (-1)^{j - i}\binom{-i}{j - i} = \binom{j - 1}{i - 1} $ by Chu--Vandermonde's convolution formula.
\end{proof}
From Lemmata~\ref{lem_weight_1} and~\ref{lem_weight_2}, we obtain the desired theorem.
\begin{thm}\label{thm_M}
  Define the edge weight function $ \omega_0 $ on $ \Gamma_0 $ as in Subsection~\ref{subsec_construction_graph}. Then we have
  \begin{equation*}
    x(\Gamma_0, \omega_0) = \mathsf{M}.
  \end{equation*}
\end{thm}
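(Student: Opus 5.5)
The plan is to compare the two matrices entry by entry. By definition, the $(i,j)$-entry of $x(\Gamma_0,\omega_0)$ is $w(i \to j)$, the total weight of all paths in $(\Gamma_0,\omega_0)$ from the left vertex $i \in I_0$ to the right vertex $j' \in J_0$. The entries of $\mathsf{M}$ are given blockwise, so I split into the top rows $1 \le i \le t$ and the bottom rows $t+1 \le i \le m = 2t$.

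For the top rows, the block $(M_1 \ M_2)$ has entries $\binom{t+i-1}{j-1}$ for $1 \le j \le t$ (from $M_1$) and $\binom{t+i-1}{t+j'-1}$ with $j = t+j'$ (from $M_2$). Both are the single formula $\binom{t+i-1}{j-1}$ for $1 \le j \le m$. Lemma~\ref{lem_weight_1} gives exactly $w(i\to j) = \binom{t+i-1}{j-1}$ on this range, so these rows agree.

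For the bottom rows $i = t+i_0$ with $1\le i_0\le t$, the block $(O \ M_3)$ gives $0$ for $j \le t$ and $\binom{t-i_0}{j_0-i_0}$ for $j = t+j_0$. Substituting, $\binom{t-i_0}{j_0-i_0} = \binom{2t-i}{j-i}$. When $j \le t < i$ we have $j - i < 0$, so $\binom{2t-i}{j-i} = 0$ as well. Hence the bottom rows of $\mathsf{M}$ are uniformly $\binom{2t-i}{j-i}$, which is Lemma~\ref{lem_weight_2}.

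There is essentially no obstacle left, since the path counting was done in the lemmas. The only care needed is bookkeeping. First, the labelling convention: rows of $x(\Gamma_0,\omega_0)$ are indexed by the left vertices $1,\dots,m$ and columns by $1',\dots,m'$, matching the order of the rows and columns of $\mathsf{M}$. Second, binomials with negative lower index (and, for $j<i$, the absence of paths in the bottom rows) must be read as $0$, consistently with how Lemma~\ref{lem_weight_2} is stated.
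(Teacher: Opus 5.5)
Your proposal is correct and follows the paper's own argument. The paper likewise derives the theorem directly from Lemma~\ref{lem_weight_1} for the rows $1\le i\le t$, where $M_1$ and $M_2$ combine into $\binom{t+i-1}{j-1}$, and from Lemma~\ref{lem_weight_2} for the rows $t+1\le i\le m$, where $(O\ M_3)$ becomes $\binom{2t-i}{j-i}$; your index bookkeeping is just the routine check the paper leaves implicit.
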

As observed in Subsection~\ref{subsec_observation}, this implies the Fujimoto conjecture.
\begin{cor}[Fujimoto conjecture, Theorem~\ref{thm_main_theorem}, restated]\label{cor_Fujimoto}
  The Fujimoto conjecture (Conjecture~\ref{conj_Fujimoto}) is true for all even integers $ m \geq 2 $.
\end{cor}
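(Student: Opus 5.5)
We need to deduce the conjecture from Theorem~\ref{thm_M}: any $m$ rows of $\mathsf{M}'$ form an invertible $m\times m$ matrix. The plan is to convert each such choice into a minor of $\mathsf{M}$, show that minor is nonzero with the Lindstr\"om--Gessel--Viennot lemma, and then absorb the few remaining rows using the block structure of $\mathsf{M}'$.

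First, since $\mathsf{M}'$ is obtained from
\[
N\coloneqq\begin{pmatrix} I_t & O\\ M_1 & M_2\\ O & M_3\end{pmatrix}
\]
by multiplying rows and columns by signs $\pm1$, an $m\times m$ minor of $\mathsf{M}'$ vanishes iff the corresponding minor of $N$ does. Choose a set $S$ of $m$ rows of $N$. Let $A\subset\{1,\dots,t\}$ be the chosen rows from the top block $(I_t\ O)$, and let $R$ be the chosen rows among the last $2t$ rows, which are exactly the rows of $\mathsf{M}$. Then $|R|=m-|A|$. Each row $e_a$ with $a\in A$ is a unit vector. Expanding the determinant along these rows (Laplace expansion) gives, up to sign, the minor $\Delta_{R,C}$ of $\mathsf{M}$ with column set $C\coloneqq\{1,\dots,m\}\setminus A$. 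Here $|R|=|C|=m-|A|$. So it suffices to show that $\Delta_{R,C}(\mathsf{M})\neq0$ for every $R\subset\{1,\dots,m\}$ and every $C\supset\{t+1,\dots,2t\}$ with $|R|=|C|$.

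Second, by Theorem~\ref{thm_M}, $\mathsf{M}=x(\Gamma_0,\omega_0)$. The edge weights of $\omega_0$ are nonnegative, and $I_0,J_0$ satisfy the hypotheses of Theorem~\ref{thm_LGV}. So $\Delta_{R,C}$ is a sum of nonnegative terms, one for each family of vertex-disjoint paths from $R$ to $C$. It is therefore enough to exhibit a single such family all of whose edges have positive weight. The positive edges of $\Gamma_0$ are:
\begin{itemize}
\item all black edges and all green edges;
\item the red edges $l_{i,j}$ with $i+j\le t$;
\item the blue edges $r_{i,j}$ with $j\le t$ (for $i\le t$ the weight $\frac{i+t-j}{i+j-1}$ is positive, and for $i>t$ it is positive when $i+j\le m$).
\end{itemize}

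Third, write $R=\{i_1<\dots<i_s\}$ and $C=\{c_1<\dots<c_s\}$. I expect the following matching condition. Since $\mathsf{M}$ is block upper triangular and $M_3$ is upper triangular, a nonzero term requires $c_k\ge i_k$ whenever $i_k>t$. For source rows $i\le t$, a red descent is available: one can move from row $i$ down to row $k$ with positive weight exactly as $w(i\searrow k)=\binom{i-1}{k-1}$ in Lemma~\ref{lem_weight_1} records. Because $C$ contains every index in $\{t+1,\dots,2t\}$, a counting argument should give $c_k\ge i_k$ for the rows above $t$ automatically, with the rows $i_k\le t$ taking the small columns. The routing is then:
\begin{itemize}
\item $i_k\mapsto c_k$ along the positive blue diagonals when $c_k\ge i_k$;
\item a red descent followed by horizontal moves when $c_k<i_k\le t$.
\end{itemize}
Non-intersection would be obtained by the standard greedy construction in $\Gamma_0$, namely taking the lowest available paths.

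The main obstacle is this last routing step. One must check that the upward paths never need a blue edge with $j>t$ (weight $0$), i.e.\ that each path rises at most $t$ levels in total. This follows because $c_k-i_k\le t$ is forced by $|C\cap\{t+1,\dots,2t\}|=t$. One must also check that the red descents never require $i+j>t$. If the direct combinatorial verification is messy, the fallback is a different route to the same goal, showing that $\Delta_{R,C}\neq0$ for the relevant pairs $(R,C)$. That route is induction on $t$, peeling off the top path as in the proofs of Lemmas~\ref{lem_up} and~\ref{lem_weight_2}.
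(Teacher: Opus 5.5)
Your proposal is correct and is essentially the paper's proof. The unit rows reduce the claim to minors $\Delta_{R,C}$ of $\mathsf{M}$ with $C\supset\{t+1,\dots,2t\}$, and Theorem~\ref{thm_M} with the Lindstr\"om--Gessel--Viennot lemma then reduces positivity to exhibiting one non-intersecting family of positive-weight paths, which the paper simply asserts exists.

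Your routing does give such a family, with one correction to the path choice. A path with $c_k<i_k$ should follow the red diagonal ending at row $c_k$. A path with $c_k\ge i_k$ should stay on row $i_k$ through the red part and then climb immediately along the blue diagonal starting at row $i_k$. It then uses only $r_{i_k,1},\dots,r_{i_k,c_k-i_k}$, with $c_k-i_k\le 2t-s\le t$. It is this early climb, not the bound $c_k-i_k\le t$ alone, that avoids the zero edges $r_{i,j}$ ($j>t$), because these fill the last $t-1$ columns of the blue part. So ``taking the lowest available paths'' is the wrong greedy rule here.
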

\begin{proof}
  It is enough to show that any $ m \times m $ submatrix $ M' $ of $ \mathsf{M}' $ is invertible. If $ u $ rows are chosen from the first $ t $ rows of $ \mathsf{M}' $, we have
  \begin{equation*}
    M' \sim
    \begin{pmatrix}
      I_u & O \\
      O   & M \\
    \end{pmatrix},
  \end{equation*}
  where $ M $ is a submatrix of $ \mathsf{M} $ and $ \sim $ means that the two matrices are equivalent under elementary row and column operations. If we write $ M $ as $ \mathsf{M}_{I, J} \ (I \subset I_0, J \subset J_0)$, clearly $ \{(t + 1)', (t + 2)', \ldots, m'\} \subset J $. Thus, by the definition of the edge weight function $ \omega_0 $, there exists at least one collection of non-intersecting paths from the vertex set $ I $ to $ J $ in $ (\Gamma_0, \omega_0) $ such that all paths have positive weights. Therefore, by Theorem~\ref{thm_LGV} and Theorem~\ref{thm_M}, we conclude that $ \det M > 0 $, and thus $ \det M' \neq 0 $.
\end{proof}

\section*{Questions}
Chern and Osserman \cite{Chern-Osserman} proved that for algebraic minimal surfaces (i.e., complete minimal surfaces with finite total curvature) immersed in $ \mathbb{R}^m $, if the Gauss map is non-degenerate, then it omits at most $ \frac{(m - 1)(m + 2)}{2} $ hyperplanes in $ \mathbb{P}^{m - 1} $ in general position. Ru \cite{Ru_2} later showed that the non-degeneracy assumption can be removed. Since $ \frac{m(m + 1)}{2} - \frac{(m - 1)(m + 2)}{2} = 1 > 0 $,
there does not exist an \emph{algebraic} minimal surface $ S $ in $ \mathbb{R}^m $ whose Gauss map omits $ \frac{m(m + 1)}{2} $ hyperplanes in $ \mathbb{P}^{m - 1} $ in general position.
\begin{ques}
  What is the optimal number of omitted hyperplanes in $ \mathbb{P}^{m - 1} $ in general position for the generalized Gauss map of algebraic minimal surfaces in $ \mathbb{R}^m $?
\end{ques}

For $ m = 3 $, the optimal number is expected to be $ 3 $ (equivalently, the classical Gauss map omits at most $ 2 $ points), known as the \textit{Osserman conjecture}. In R. Kobayashi's report \cite{Kobayashi-Miyaoka} (joint with R. Miyaoka), it is suggested that this conjecture may be settled by establishing a ``Second Main Theorem'' within the framework of ``Nevanlinna--Galois theory''.

\vspace{+0.5mm}
In Nevanlinna theory, the geometric meaning of the optimal number of omitted values of holomorphic curves is well known. For example, by the theory of covering surfaces, the number $ 2 $, which appears in Picard's theorem as the optimal number of omitted values, can be interpreted as the Euler characteristic of the Riemann sphere $ \mathbb{P}^1 \simeq \mathbb{C} \cup \{\infty\} $.
\begin{ques}
  What is the geometric meaning of the number $ \frac{m(m + 1)}{2} $?
\end{ques}

\appendix
\section*{Appendix}
For the convenience of the reader, we give some auxiliary functions used in the construction of best possible examples for Theorems~\ref{thm_gauss_map_Rm} and~\ref{thm_gauss_map_Cm}, following Fujimoto~\cite{Fujimoto_3}, and construct a best possible example for Theorem~\ref{thm_gauss_map_Rm}. We denote the \emph{Wronskian} of $ m $ holomorphic functions $ h_1(z), h_2(z), \ldots, h_m(z) $ by $ W(h_1, h_2, \ldots, h_m) $. We use the following property of the Wronskian:
\begin{center}
  $ h_1(z), h_2(z), \ldots, h_m(z) $ are linearly independent over $ \mathbb{C} $

  if and only if $ W(h_1, h_2, \ldots, h_m)(z) $ is not identically zero.
\end{center}

\begin{lem*}(cf. \cite{Fujimoto_3})
  For each even integer $ m = 2t \geq 2 $, consider the following $ 3t + m(t - 1) = \frac{m(m + 1)}{2} $ polynomials
  \begin{alignat*}{3}
     & f_i(z) \ (\text{as defined in Conjecture~\ref{conj_Fujimoto}})         &       & (1 \leq i \leq 3t), \\
     & f_{3t + i}(z) \coloneqq (z - a_2)^{m - i}(z - b_2)^{i - 1}             &       & (1 \leq i \leq m),  \\
     & \quad \vdots                                                           &       &                     \\
     & f_{3t + 2t(j - 2) + i}(z) \coloneqq (z - a_j)^{m - i}(z - b_j)^{i - 1} &       & (1 \leq i \leq m),  \\
     & \quad \vdots                                                           &       &                     \\
     & f_{3t + 2t(t - 2) + i}(z) \coloneqq (z - a_t)^{m - i}(z - b_t)^{i - 1} & \quad & (1 \leq i \leq m),  \\
  \end{alignat*}
  where $ a_1 \coloneqq 0, b_1 \coloneqq 1, a_j, b_j \ (2 \leq j \leq t) $ are distinct complex numbers. If these numbers are chosen suitably, then any $ m $ polynomials chosen from the above are linearly independent over $ \mathbb{C} $.
\end{lem*}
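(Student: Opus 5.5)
We argue by induction on the number of groups, adding one pair $(a_j,b_j)$ at a time. The base case is the Main Theorem (Corollary~\ref{cor_Fujimoto}): the $3t$ polynomials $f_1,\dots,f_{3t}$ are in general position. Suppose $a_2,b_2,\dots,a_{j-1},b_{j-1}$ have been chosen so that the polynomials in groups $1,\dots,j-1$ are in general position. We show that for all $(a_j,b_j)$ outside a proper algebraic subset of $\mathbb{C}^2$, the enlarged family is still in general position.

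Fix an $m$-element selection. It consists of a set $A$ of previously chosen polynomials, with $|A|=m-k$, together with $k$ polynomials $(z-a_j)^{m-i}(z-b_j)^{i-1}$ for $i\in K$, $|K|=k$. Its Wronskian, equivalently the determinant of its coefficient matrix in the basis $1,z,\dots,z^{m-1}$, is a polynomial $P_{A,K}(a_j,b_j)$. There are finitely many such selections. Hence it suffices to show that each $P_{A,K}$ is not identically zero, and then take $(a_j,b_j)$ off the union of their zero sets and off the diagonals forcing coincidences with earlier numbers.

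To see $P_{A,K}\not\equiv 0$, specialize $b_j\to\infty$ after rescaling. The polynomial $(z-a_j)^{m-i}(z-b_j)^{i-1}/(-b_j)^{i-1}$ tends to $(z-a_j)^{m-i}$ as $b_j\to\infty$. The $k$ limits $(z-a_j)^{m-i}$, $i\in K$, have distinct degrees $m-i$ at the distinct center $a_j$. So it is enough to find one value $a_j$ for which the set $A$ together with $\{(z-a_j)^{d}: d\in D\}$, where $D=\{m-i: i\in K\}$, is linearly independent.

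This is the main obstacle. I would treat it as a Schubert-type condition: $\operatorname{span}\{(z-a)^d: d\in D\}$ must meet $V:=\operatorname{span}A$, of dimension $m-k$, only in $0$. Dually, look at the order-of-vanishing flag at $z=a$. The span of $(z-a)^d$ for $d\in D$ is not itself a flag subspace. Instead, let $a\to\infty$: normalized, $(z-a)^d$ tends to the constant $1$, which degenerates, so this limit is useless. Better is to expand in $a$ and take the leading term in the lowest-order direction. Equivalently, take $a$ generic and use the fact that the osculating flag of the rational normal curve at a generic point is transverse to any fixed subspace $V$.

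Concretely, the span of $(z-a)^d$, $d\in D$, contains the span of $(z-a)^d$ for $d\ge \min D$ only when $D$ is an interval. So for general $D$ I would induct one element at a time. Adjoin first the highest power $(z-a)^{\max D}$, which avoids any fixed proper subspace for generic $a$ because the rational normal curve is nondegenerate. Then quotient by it and repeat, using the Wronskian criterion recalled in the Appendix. Because the quotient map depends on $a$, this step requires care.

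An alternative that avoids this difficulty is to use both parameters directly. Choose $b_j=a_j+\varepsilon$ and expand in $\varepsilon$. Each $(z-a_j)^{m-i}(z-a_j-\varepsilon)^{i-1}$ equals $(z-a_j)^{m-1}$ plus higher-order terms in $\varepsilon$. Extracting the leading $\varepsilon$-order of the determinant, by successive differences as in Lemma~\ref{lem_det}, gives the vectors $(z-a_j)^{m-1},(z-a_j)^{m-2},\dots,(z-a_j)^{m-k}$, a flag at $a_j$. For such a flag, transversality to the fixed $(m-k)$-dimensional space $V$ at generic $a_j$ holds: the osculating $k$-space of the rational normal curve meets a fixed codimension-$k$ subspace trivially for all but finitely many points, because the relevant Wronskian-type determinant is a nonzero polynomial in $a_j$. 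This gives the required nonvanishing.
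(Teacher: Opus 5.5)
Your final argument is correct and is essentially the paper's proof. In both, the two new centers are made to collide, and the lowest-order term of the determinant factors as a Vandermonde-type determinant in the exponents times, up to a nonzero constant, the Wronskian of the old polynomials at the collision point. The paper fixes $b_{u-1}$ at a point $c$ where that Wronskian is nonzero and expands in $a_{u-1}$ (it states this as a degree bound, but its Laplace expansion actually isolates the lowest-order term), while you fix a generic $a_j$ and expand in $\varepsilon=b_j-a_j$.

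The $b_j\to\infty$ route you set aside should be dropped rather than repaired, because the condition it reduces to can fail for every $a_j$: if $A$ contains $f_1=1$ and $m\in K$, the limit $(z-a_j)^0=1$ already lies in $\operatorname{span}A$.
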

\begin{proof}
  Fix $ t \geq 1 $.
  We claim that for each $ 2 \leq u \leq t + 1 $, any $ m $ polynomials chosen from $ f_i(z) \ (1 \leq i \leq 3t + 2t(u - 2)) $ are linearly independent, provided that the complex numbers $ a_2, b_2, \ldots, a_{u - 1}, b_{u - 1} $ are chosen suitably. Our goal is to prove this for the case $ u = t + 1 $. We prove this claim by induction on $ u $. The base case $ u = 2 $ follows from Corollary~\ref{cor_Fujimoto}. Assume that the claim holds for $ u - 1 \ (u > 2) $. By the induction hypothesis, there exist distinct complex numbers $ a_2, b_2, \ldots, a_{u - 2}, b_{u - 2} $ such that any $ m $ polynomials chosen from $ f_i(z) \ (1 \leq i \leq 3t + 2t(u - 3)) $ are linearly independent. Hence, we may assume that the $ m $ polynomials $ f_{i_1}, f_{i_2}, \ldots, f_{i_m} $ under consideration are labelled by the following indices
  \begin{equation*}
    i_1 < i_2 < \cdots < i_k \leq 3t + 2t(u - 3) < i_{k + 1} < \cdots < i_m.
  \end{equation*}
  Set $ g_r(z) \coloneqq f_{i_r}(z) \ (1 \leq r \leq m) $. Note that $ W(g_1, g_2, \ldots, g_k)(z) $ is not identically zero. So, we can find a complex number $ c $ such that $ W(g_1, g_2, \ldots, g_k)(c) \neq 0 $. By a coordinate translation, we may assume $ W(g_1, g_2, \ldots, g_k) (0) \neq 0 $. We remark that the numbers $ a_1, b_1, \ldots, a_{u - 2}, b_{u - 2} $ are also translated in this process. If one of the values is $ 0 $, we choose another $ c $. We write the $ m $ polynomials $ g_r $ by
  \begin{equation*}
    g_r(z) = \sum_{s = 0}^{m - 1}A_{r, s + 1}z^s,
  \end{equation*}
  where each $ A_{r, s} \ (1 \leq r, s \leq m) $ is a polynomial in $ a_{u - 1} $ and $ b_{u - 1} $. We want to show that $ F_{i_1, i_2, \ldots, i_m}(a_{u - 1}) \coloneqq \det (A_{r, s})_{1 \leq r, s \leq m} $ is nonzero as a polynomial in $ a_{u - 1} $ by choosing $ b_{u - 1} $ suitably.
  By setting $ l_r \coloneqq m - (i_r - 3t - 2t(u - 3)) \geq 0 $, we can write $ f_{i_r} = f_{3t + 2t(u - 3) + (m - l_r)} $.
  Now set $ b_{u - 1} \coloneqq 0 $. For $ r \geq k + 1 $, we have
  \begin{align*}
    g_r(z) = (z - a_{u - 1})^{l_r}z^{m - l_r - 1} = \sum_{s = 0}^{l_r}\binom{l_r}{s}(-a_{u - 1})^{s}z^{m - s - 1} = \sum_{s = 0}^{m - 1}\binom{l_r}{m - s - 1}(-a_{u - 1})^{m - s - 1}z^s.
  \end{align*}
  Hence, we have
  \begin{equation*}
    A_{r, s + 1} = \binom{l_r}{m - s - 1}(-a_{u - 1})^{m - s - 1} \quad (r \geq k + 1).
  \end{equation*}
  Note that $ s + 1 < m - l_r $ if and only if $ A_{r, s + 1} = 0 $ as a polynomial in $ a_{u - 1} $ for $ r \geq k + 1 $. In particular, since
  \begin{equation*}
    m - l_r = i_r - 3t - 2t(u - 3) < i_{r + 1} - 3t - 2t(u - 3) < \cdots < i_{m} - 3t - 2t(u - 3) \leq m,
  \end{equation*}
  the rightmost $ m - r + 1 $ entries of the $ r $-th ($ r \geq k+ 1 $) row of the matrix $ (A_{r, s})_{1 \leq r, s \leq m} $ are nonzero monomials, and $ \deg A_{r, s} = m - s $. By the Laplace expansion of the determinant along the first $ k $ columns and the last $ m - k $ columns of $ (A_{r, s})_{1 \leq r, s \leq m} $, since the entries in the first $ k $ rows of this matrix are independent of $ a_{u - 1} $, it follows that
  \begin{align*}
    \deg F_{i_1, i_2, \ldots, i_m}(a_{u - 1}) & \leq \deg \det(A_{r, s})_{k + 1 \leq r, s \leq m}                                     \\
                                              & \leq (m - (k + 1)) + (m - (k + 2)) + \cdots + (m - m) = \frac{(m - k - 1)(m - k)}{2},
  \end{align*}
  where equality holds in both of the above inequalities if $ \det(A_{r, s})_{1 \leq r, s \leq k} \cdot \det(A_{r, s})_{k + 1 \leq r, s \leq m} \neq 0 $. We now verify that this condition is indeed valid. Since
  \begin{equation*}
    \left. \frac{d^{v}}{dz^{v}}g_r(z) \right|_{z = 0} = \left. \frac{d^{v}}{dz^{v}} \left(\sum_{s = 0}^{m - 1}A_{r, s + 1}z^s\right) \right|_{z = 0} = v! A_{r, v + 1},
  \end{equation*}
  we have $ C \times \det(A_{r, s})_{1 \leq r, s \leq k} = W(g_1, g_2, \ldots, g_k)(0) \neq 0 $, where $ C $ is a nonzero constant. On the other hand,
  \begin{align*}
    \det (A_{r, s})_{k + 1 \leq r, s \leq m}
     & = \det \left((-1)^{m - s}\binom{l_r}{m - s}\right)_{k + 1 \leq r, s \leq m}a_{u - 1}^{\frac{(m - k - 1)(m - k)}{2}} \\
     & = \left(C' \times \det(l_r^{m - s})_{k + 1 \leq r, s \leq m}\right)a_{u - 1}^{\frac{(m - k - 1)(m - k)}{2}} \neq 0
  \end{align*}
  since the Vandermonde determinant does not vanish.
  Thus, $ F_{i_1, i_2, \ldots, i_m}(a_{u - 1}) $ is nonzero as a polynomial. In a similar way, for other indices $ 1 \leq i_1' < i_2' < \cdots < i_m' \leq 3t + 2t(u - 2) $, we can show that $ F_{i_1', i_2', \ldots, i_m'}(a_{u - 1}) $ is also nonzero as a polynomial. (Note that we can take a common $ c $ for all these indices in the above proof, and thus the numbers $ a_1, b_1, a_2, b_2, \ldots, a_{u - 2}, b_{u - 2} $ and $ b_{u - 1} $ can be chosen independently of the indices.) Since there are only finitely many polynomials $ F_{i_1, i_2, \ldots, i_m}(a_{u - 1}) $, we can take a complex number $ a_{u - 1} $ such that it differs from $ a_1, b_1, a_2, b_2, \ldots, a_{u - 2}, b_{u - 2}, b_{u - 1} $, and none of these polynomials vanish at $ a_{u - 1} $. By translating the coordinate again so that $ a_1 = 0 $, the proof is complete.
\end{proof}

Using this lemma, we give a best possible example for Theorem~\ref{thm_gauss_map_Rm}. Here we construct the minimal surface so that it is pseudo-algebraic, following the argument in \cite{Fujimoto_1}, pp.~42--43, \cite{Fujimoto_7}, pp.~198--199, and \cite{Kawakami}, pp.45--47.

\begin{thm*}
  Let $ m \geq 4 $ be an even integer. Then there exists a pseudo-algebraic minimal surface in $ \mathbb{R}^m $ whose Gauss map is non-degenerate and omits $ \frac{m(m + 1)}{2} $ hyperplanes in $ \mathbb{P}^{m - 1} $ in general position.
\end{thm*}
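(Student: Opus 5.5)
We will follow Fujimoto's construction (\cite{Fujimoto_3}, p.~384; \cite{Fujimoto_1}, pp.~39--43), taking as base Riemann surface a punctured sphere $ S \coloneqq \mathbb{C} \setminus E $. Here $ E $ is a finite set containing the points $ a_1 = 0, b_1 = 1, a_j, b_j $ $ (2 \leq j \leq t) $ given by the Lemma in this appendix, possibly together with further points. The key input is the Lemma: the $ \frac{m(m+1)}{2} $ polynomials $ f_1, \ldots, f_{m(m+1)/2} $, each of degree at most $ m - 1 $, are in general position. Write $ f_k(z) = \sum_{s=0}^{m-1} A_{k,s+1} z^s $. We will look for a holomorphic map $ G = (g_1 : \cdots : g_m) $ with $ g_1^2 + \cdots + g_m^2 \equiv 0 $ and with the property that, for each $ k $, $ \sum_s c_{k,s} g_s $ is a nowhere-vanishing multiple of $ f_k $. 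Equivalently, after a linear change of coordinates $ L $ on $ \mathbb{C}^m $, we want $ G $ to be the image under $ L $ of a curve built from the monomials $ (1, z, \ldots, z^{m-1}) $, twisted by a factor. The omitted hyperplanes are then $ H_k = \{\sum_s A'_{k,s} w_s = 0\} $, where $ A' $ is the transformed coefficient matrix; these are in general position precisely because of the Lemma.

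\textbf{Step 1.} Write the minimal surface through the Weierstrass-type representation $ x = \operatorname{Re}\int \phi\, dz $, with $ \phi = (\phi_1, \ldots, \phi_m) $ satisfying $ \sum \phi_i^2 = 0 $. Put
\[
\phi = h(z)\, L\bigl(\psi(z)\bigr), \qquad h(z) = \prod_{e \in E}(z - e)^{-N},
\]
where $ \psi $ is a vector of polynomials and $ L \in GL_m(\mathbb{C}) $.

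\textbf{Step 2.} Arrange the isotropy condition. For even $ m $ the quadric $ Q_{m-2} $ contains a rational normal curve only up to a suitable reparametrization. Following Fujimoto, we will use polynomials in $ z $ of degree $ m - 1 $ packaged so that $ \phi $ lies in $ Q_{m-2} $. One way is to split $ \mathbb{C}^m = \mathbb{C}^t \oplus \mathbb{C}^t $ with the quadratic form $ \sum_i u_i v_i $. We then take $ u $ spanned by $ z^0, \ldots, z^{t-1} $ times one polynomial factor, and $ v $ chosen to solve $ \sum u_i v_i = 0 $; this is a linear condition, and it can be satisfied by choosing a suitable polynomial $ v $. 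The hyperplanes are then the pullbacks of the $ f_k $. Non-degeneracy of $ G $ follows because $ 1, z, \ldots, z^{m-1} $ are linearly independent.

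\textbf{Step 3.} Completeness, period conditions and the pseudo-algebraic property. The metric is
\[
ds = |h|\, \|\psi\|\, |dz|.
\]
Taking $ N $ large makes every path tending to a puncture in $ E $ have infinite length, and $ \infty $ is handled by degree counting. The difficulty is that $ \operatorname{Re}\int \phi\, dz $ must be single-valued, i.e.\ the residues of $ \phi $ at each $ e \in E $ must be purely imaginary, ideally zero. Rather than force this on $ \mathbb{C} \setminus E $ directly, we will follow Kawakami \cite{Kawakami}. We pass to the universal cover, or to a finite branched cover, on which $ \phi $ becomes exact, or we modify $ h $ by adding extra free parameters (more punctures, or exponents) and solve a finite linear system making all residues vanish. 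Pseudo-algebraic means that $ \phi $ extends meromorphically to a compact surface while the metric remains complete on the universal cover; our $ \phi $ is rational on $ \mathbb{P}^1 $, so this holds once the surface is taken to be the universal cover of $ \mathbb{C} \setminus E $ with the lifted data. Completeness then follows from Fujimoto's/Kawakami's criterion: the pulled-back metric is complete whenever the poles of $ h $ are of sufficiently high order.

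\textbf{Main obstacle.} We expect the hardest step to be Step 2 combined with the period problem: making $ \phi $ isotropic while keeping each $ \langle H_k, \phi\rangle $ equal to $ f_k $ times a nowhere-zero factor, so that $ H_k $ is genuinely omitted. The plan is to absorb all zeros of the $ f_k $ into $ E $ (which we may do, since they lie among $ a_j, b_j $ and $ \infty $), which forces those punctures into the surface. The remaining task is to check that the isotropy construction does not introduce new common zeros; we would first attempt the explicit choice of $ v $ via Fujimoto's formula and verify it directly.
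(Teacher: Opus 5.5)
\textbf{The overall architecture matches the paper, but Step 3 fails as written and Step 2 is left unfinished.} The shared architecture is: Weierstrass data equal to a scalar factor times an isotropic basis of polynomials of degree $\le m-1$, hyperplanes read off from the coefficients of the $f_k$ in that basis, and the universal cover of $\mathbb{C}\setminus E$ to dispose of periods.

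\textbf{Step 3 (completeness).} Suppose $h$ has a pole of order $n_e$ at each $e\in E$. Since $\|L(1,z,\ldots,z^{m-1})\|\asymp |z|^{m-1}$, you get $ds\asymp |z|^{m-1-\sum_e n_e}|dz|$ near $z=\infty$. A path to $\infty$ therefore has infinite length only if $\sum_e n_e\le m$.

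You cannot avoid this by filling in $\infty$. Since $f_1=1$ has degree $<m-1$, $G(\infty)$ lies on $H_1$, so $\infty$ must be an end at infinite distance. Each finite puncture $e$ is at infinite distance only if $n_e\ge 1$, because the components have no common zero. Finally, $E$ must contain the $m$ points $a_1,b_1,\ldots,a_t,b_t$, where the $f_k$ vanish.

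So the choice is forced: all $n_e=1$, and $E=\{a_1,b_1,\ldots,a_t,b_t\}$ exactly. This is the paper's factor $1/\prod_j(z-a_j)(z-b_j)$. With it, $ds\asymp|dz|/|z-e|$ at each finite puncture and $ds\asymp|dz|/|z|$ at $\infty$, all logarithmically divergent.

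Consequently ``taking $N$ large'', ``possibly together with further points'', and killing residues via ``more punctures, or exponents'' all destroy completeness at $\infty$. The balance is tight. Passing to the universal cover, as the paper does, is the only way to handle the periods here.

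\textbf{Step 2 (isotropy).} Isotropy alone is not enough. The $m$ components must form a basis of $\mathbb{C}[z]_{\le m-1}$, both for non-degeneracy of $G$ and for writing each $f_k$ as $\sum_j c_{kj}g_j$. You never address this basis requirement, and it is exactly where $m\ge 4$ enters.

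Your hyperbolic splitting does work with the right choice. Take $u_i=z^{i-1}$ and $v_i=c_iz^{2t-i}$ for $1\le i\le t$. Then $\sum_i u_iv_i=(\sum_i c_i)\,z^{2t-1}$, so you need $\sum_i c_i=0$ with every $c_i\neq 0$. This is possible if and only if $t\ge 2$; for $m=2$ it fails.

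Now take $c_1=\cdots=c_{t-1}=1$ and $c_t=-(t-1)$, and diagonalize the hyperbolic form. This gives the paper's explicit basis:
$h_{2l+1}=z^l+z^{2t-l-1}$ and $h_{2l+2}=\sqrt{-1}(z^l-z^{2t-l-1})$ for $0\le l\le t-2$,
together with $h_{2t-1}=\sqrt{-(t-1)}(z^{t-1}+z^t)$ and $h_{2t}=\sqrt{t-1}(z^{t-1}-z^t)$.

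No ``reparametrization'' is needed: the rational normal curve $(1:z:\cdots:z^{m-1})$ itself lies on a nondegenerate quadric. With these two corrections your proposal becomes the paper's proof.
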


\begin{proof}
  Let $ t \coloneqq \frac{m}{2} $. We define $ m $ functions
  \begin{alignat*}{3}
     & h_{2l + 1}(z) \coloneqq z^l + z^{2t - l - 1}              & \quad & \qquad (0 \leq l \leq t - 2),                               \\
     & h_{2l + 2}(z) \coloneqq \sqrt{-1}(z^l - z^{2t - l - 1})   & \quad & \qquad (0 \leq l \leq t - 2),                               \\
     & h_{2t - 1}(z) \coloneqq \sqrt{-(t - 1)}(z^{t - 1} + z^t), & \quad & h_m(z) = h_{2t}(z) \coloneqq \sqrt{t - 1}(z^{t - 1} - z^t).
  \end{alignat*}
  Then we have
  \begin{equation*}
    \sum_{k = 1}^m h_k(z)^2 = \sum_{l = 0}^{t - 2}\{(z^l + z^{2t - l - 1})^2 - (z^l - z^{2t - l - 1})^2\} -(t - 1)\{(z^{t - 1} + z^t)^2 - (z^{t - 1} - z^t)^2 \} = 0.
  \end{equation*}
  By the above lemma, for suitable constants $ a_1 \coloneqq 0, b_1 \coloneqq 1 $, and $ a_j, b_j \ (2 \leq j \leq t) $, any $ m $ polynomials among $ f_j(z) \ (1 \leq j \leq \frac{m(m + 1)}{2}) $ are linearly independent over $ \mathbb{C} $. Now set
  \begin{equation*}
    S \coloneqq \mathbb{C} \setminus \{a_1, a_2, \ldots, a_t, b_1, b_2, \ldots, b_t\},
  \end{equation*}
  and let $ \pi : \widetilde{S} \to S $ denote the universal covering. Using the function
  \begin{equation*}
    \psi(z) \coloneqq \frac{1}{(z - a_1)(z - b_1)(z - a_2)(z - b_2) \cdots (z - a_t)(z - b_t)},
  \end{equation*}
  we define $ m $ holomorphic functions $ \widetilde{g}_k \coloneqq \psi h_k \ (1 \leq k \leq m) $ on $ S $ and holomorphic 1-forms $ \phi_k \coloneqq (\widetilde{g}_k \circ \pi)dz \ (1 \leq k \leq m) $ on $ \widetilde{S} $. Then $ \phi_1, \phi_2, \ldots, \phi_m $ have no common zeros. Next, we define $ m $ functions $ x_i $ on $ \widetilde{S} $ by
  \begin{equation*}
    x_i(z) \coloneqq \Re \int_{z_0}^{z}\phi_i \quad (1 \leq i \leq m).
  \end{equation*}
  Since $ \sum_{k = 1}^m(\widetilde{g}_k \circ \pi)^2 = 0 $, by the Enneper--Weierstrass representation (see \cite{Kawakami}, Theorem~2.9, p.~8), the surface
  \begin{equation*}
    x \coloneqq (x_1, x_2, \ldots, x_m) : \widetilde{S} \to \mathbb{R}^m
  \end{equation*}
  is a minimal surface. The Gauss map is given by $ G = (\widetilde{g}_1 \circ \pi : \widetilde{g}_2 \circ \pi : \cdots : \widetilde{g}_m \circ \pi ) $, and hence $ G = (h_1 \circ \pi : h_2 \circ \pi : \cdots : h_m \circ \pi) $. Since $ h_1, h_2, \ldots, h_m $ are linearly independent over $ \mathbb{C} $, the Gauss map $ G $ is non-degenerate. Moreover, since $ h_1, h_2, \ldots, h_m $ form a basis of the space of all polynomials of degree at most $ m - 1 $, we can write
  \begin{equation*}
    f_i(z) = \sum_{j = 1}^m c_{ij}h_{j}(z)
  \end{equation*}
  for some constants $ c_{ij} \ (1 \leq i \leq \frac{m(m + 1)}{2}, 1 \leq j \leq m) $. We consider $ \frac{m(m + 1)}{2} $ hyperplanes
  \begin{equation*}
    H_i : c_{i1}w_1 + c_{i2}w_2 + \cdots + c_{im}w_m = 0
  \end{equation*}
  in $ \mathbb{P}^{m - 1} $. Note that these hyperplanes are in general position. Moreover, the Gauss map $ G $ omits these hyperplanes. Indeed, for $ 1 \leq i \leq \frac{m(m + 1)}{2} $, we have
  \begin{equation*}
    \sum_{j = 1}^m c_{ij}(h_j \circ \pi)(z) = f_i(\pi(z)) = (\pi(z) - a_i)^{r_i}(\pi(z) - b_i)^{s_i}
  \end{equation*}
  for some $ r_i $ and $ s_i $, and these functions do not vanish by the definition of $ S $. To complete the proof, it is enough to show that the minimal surface $ \widetilde{S} $ is complete. Since the metric $ d\widetilde{s}^2 $ on $ \widetilde{S} $ is induced from the metric $ d{s}^2 $ on $ S $ via $ \pi $, it suffices to verify the completeness of $ S $. The metric $ d{s}^2 $ induced by the standard Euclidean metric on $ \mathbb{R}^m $ is given by
  \begin{align*}
    d{s}^2 & = \frac{1}{2}(|\widetilde{g}_1(z)|^2 + |\widetilde{g}_2(z)|^2 + \cdots + |\widetilde{g}_m(z)|^2)|dz|^2                                                                        \\
           & = \frac{\sum_{l = 0}^{t - 2}(|z|^{2l} + |z|^{2(2t - l - 1)}) + (t - 1)(|z|^{2(t - 1)} + |z|^{2t})}{|(z - a_1)(z - b_1)(z - a_2)(z - b_2) \cdots (z - a_t)(z - b_t)|^2}|dz|^2.
  \end{align*}
  Around $ \infty $, taking the local coordinate $ \zeta = \frac{1}{z} $, it is represented as
  \begin{equation*}
    d{s}^2 = \frac{\sum_{l = 0}^{t - 2}(|\zeta|^{2(2t - l - 1)} + |\zeta|^{2l}) + (t - 1)(|\zeta|^{2t} + |\zeta|^{2(t - 1)})}{|(1 - b_1\zeta)(1 - a_2\zeta)(1 - b_2\zeta) \cdots (1 - a_t\zeta)(1 - b_t\zeta)|^2}\frac{|d\zeta|^2}{|\zeta|^2}.
  \end{equation*}
  We take a piecewise smooth curve $ \gamma(t) \ (0 \leq t < 1) $ in $ S $ tending to a boundary point of $ S $, namely one of $ a_1, b_1, a_2, b_2, \ldots, a_t, b_t, \infty $, as $ t \to 1 $. By the above expressions for $ d{s}^2 $, we can easily verify that the length of $ \gamma $ is infinite. Therefore, by construction, $ x : \widetilde{S} \to \mathbb{R}^{m} $ is a pseudo-algebraic minimal surface. This completes the proof.
\end{proof}
\section*{Acknowledgments.}
The author thanks Professors Ryoichi Kobayashi and Sho Tanimoto for helpful comments and suggestions. The author also thanks Ryotaro Nishimura for daily discussions.

In preparing the figures, the author referred to the \TeX\ source code of \cite{Fomin-Zelevinsky}.


\begin{thebibliography}{99}
  \bibitem{Bailey} W. N. Bailey, \textit{Generalized hypergeometric series}, Cambridge Math. Tracts \textbf{32}, Cambridge Univ. Press, 1935.
  \bibitem{Chern-Osserman} S. S. Chern and R. Osserman, Complete minimal surfaces in euclidean $ n $-space, J. Anal. Math. \textbf{19} (1967), pp.~15--34.
  \bibitem{Fomin-Zelevinsky} S. Fomin and A. Zelevinsky, \textit{Total positivity: Tests and parametrizations}, The Mathematical Math. Intelligencer \textbf{22} (2000), pp.~23--33.
  \bibitem{Fujimoto_2} H. Fujimoto, \textit{On the value distribution of the Gauss map of minimal surfaces in $ \mathbb{R}^m $} (in Japanese)[Translated from Japanese], SUGAKU \textbf{40} (1988), pp.~312--321.
  \bibitem{Fujimoto_1} H. Fujimoto, \textit{Examples of complete minimal surfaces in $ \mathbb{R}^m $ whose Gauss map omit at most $ \frac{m(m + 1)}{2} $ hyperplanes in general position}, Sci. Rep. Kanazawa Univ. \textbf{33} (1989), pp.~37--43.
  \bibitem{Fujimoto_3} H. Fujimoto, \textit{Modified defect relations for the Gauss map of minimal surfaces. II}, J. Differential Geom., \textbf{31} (1990), pp.~365--385.
  \bibitem{Fujimoto_4} H. Fujimoto, \textit{Modified defect relations for the Gauss map of minimal surfaces, III}, Nagoya math. J. \textbf{124} (1991), pp.~13--40.
  \bibitem{Fujimoto_5} H. Fujimoto, \textit{Some function-theoretic properties of the Gauss map of minimal surfaces} \ (Holomorphic mappings, Diophantine Geometry and Related Topics : in Honor of Professor Shoshichi Kobayashi on his 60th Birthday), \textbf{819} (1993), pp.~35--49.
  \bibitem{Fujimoto_6} H. Fujimoto, \textit{Gauss Maps of Complete Minimal Surfaces} In : Progress in Differential Geometry, Adv. Stud. Pure Math. \textbf{22} (1993). pp.~13--29.
  \bibitem{Fujimoto_7} H. Fujimoto, \textit{Value Distribution Theory of the Gauss Map of Minimal Surfaces in $\mathbb{R}^m$}, Aspects of Math. \textbf{E21}, 1993.
  \bibitem{Gessel-Viennot} I. Gessel and G. X. Viennot, \textit{Binomial determinants, paths, and hooklength formulae}, Adv. in Math. \textbf{58} (1985), pp.~300--321.
  \bibitem{Kawakami} Y. Kawakami, \textit{Value distribution theoretical properties of the Gauss map of pseudo-algebraic minimal surfaces}, 2006, arXiv:math/0608351, preprint.
  \bibitem{Kobayashi-Miyaoka} R. Kobayashi (in collaboration with R. Miyaoka), \textit{Collective Cohn--Vossen Problem, Parabolic Localization Principle and Period Condition of Algebraic Minimal Surfaces}, report at the Eighteenth Oka Symposium, 2020.
  \bibitem{Lindstrom} B. Lindstr\"om, \textit{On the vector representations of induced matroids}, Bull. London Math. Soc. \textbf{5} (1973), pp.~85--90.
  \bibitem{Osserman} R. Osserman, \textit{A Survey of Minimal Surfaces}, 2nd ed., Dover Publications Inc., 1986.
  \bibitem{Osserman-Ru} R. Osserman and M. Ru, \textit{An estimate for the Gauss map curvature of minimal surfaces in $ \mathbb{R}^m $ whose Gauss map omits a set of hyperplanes}, J. Differential Geom., \textbf{45} (1997), pp.~578--593.
  \bibitem{Ru} M. Ru, \textit{On the Gauss map of Minimal surfaces immersed in $ \mathbb{R}^n $}, J. Differential Geom. \textbf{34} (1991), pp.~411--423.
  \bibitem{Ru_2} M. Ru, \textit{On the Gauss map of minimal surfaces with finite total curvature}, Bull. Austral. Math. Soc. \textbf{44} (1991), pp.~225--232.
  \bibitem{Ru_3} M. Ru, Minimal surfaces through Nevanlinna theory, De Gruyter Stud. Math. \textbf{92}, 2023.
  \bibitem{Whitney} A. M. Whitney, A reduction theorem for totally positive matrices, J. d'Analyse Math. \textbf{2} (1952), pp.~88--92.
\end{thebibliography}
\end{document}